\documentclass[letterpaper, 10 pt, conference]{ieeeconf}  

\IEEEoverridecommandlockouts                              

\usepackage{graphics} 
\usepackage{epsfig} 
\usepackage{times} 
\usepackage{amsmath} 
\usepackage{amssymb}  
\usepackage{dsfont}
\usepackage{booktabs}
\usepackage{mathtools}
\mathtoolsset{showonlyrefs}
\usepackage{soul} 
\usepackage{verbatim}

\usepackage{nicefrac}       
\usepackage{microtype}      
\usepackage{tabto}
\usepackage{xcolor}
\usepackage{cite}

\usepackage{enumerate}
\usepackage{caption}
\usepackage{subcaption}

\usepackage{relsize}

\newcommand{\Ldn}{L^\mathrm{d}}
\newcommand{\Lup}{L^\mathrm{u}}

\usepackage[notocbasic]{nomencl}
\makenomenclature
\usepackage{etoolbox}

\usepackage{algorithm}
\usepackage[noend]{algorithmic}

\usepackage{hyperref}       
\usepackage{url}            

\usepackage{xpatch}

\newtheorem{theorem}{Theorem}
\newtheorem{proposition}[theorem]{Proposition}

\newtheorem{lemma}[theorem]{Lemma}
\newtheorem{corollary}[theorem]{Corollary}

\title{\LARGE \bf
Optimizing Weighted Hodge Laplacian Flows on Simplicial Complexes
}

\author{Mathias Hudoba de Badyn and Tyler Summers
\thanks{MHdB is with DAS Lab and the Department of Technology Systems at the University of Oslo. TS is with the Department of Mechanical Engineering at University of Texas, Dallas. Emails: \texttt{mathias.hudoba@its.uio.no, tyler.summers@utdallas.edu}. This work was supported by the Research Council of Norway under the Centre for Space Systems and Sensors (309835) and the FME SOLAR (350244), and the United States Air Force Office of Scientific Research under Grants FA9550-23-1-0424 and FA2386-24-1-4014, and by the National Science Foundation under Grant ECCS-2047040.}
\\
}

\begin{document}

\maketitle
\thispagestyle{empty}
\pagestyle{empty}

\begin{abstract}
Simplicial complexes are generalizations of graphs that describe higher-order network interactions among nodes in the graph. Network dynamics described by graph Laplacian flows have been widely studied in network science and control theory, and these can be generalized to simplicial complexes using Hodge Laplacians. We study weighted Hodge Laplacian flows on simplicial complexes. In particular, we develop a framework for weighted consensus dynamics based on weighted Hodge Laplacian flows and show some decomposition results for weighted Hodge Laplacians. We then show that two key spectral functions of the weighted Hodge Laplacians, the trace of the pseudoinverse and the smallest non-zero eigenvalue, are jointly convex in upper and lower simplex weights and can be formulated as semidefinite programs. Thus, globally optimal weights can be efficiently determined to optimize flows in terms of these functions. Numerical experiments demonstrate that optimal weights can substantially improve these metrics compared to uniform weights.
\end{abstract}

\section{Introduction} \label{sec:intro} 

Dynamical systems distributed over networks have applications in robotics~\cite{Joordens2009}, natural science (eg., flocking of birds and schooling of fish)~\cite{vicsek1995}, fundamental physics~\cite{loll2019quantum}, opinion dynamics and social science~\cite{Altafini2012}, distributed optimization algorithms~\cite{wang2016fully}, and formation flight~\cite{Tanner2004}, among others.
The consensus protocol has had much success in allowing control architectures defined over these systems to come to agreement on common variables, such as a global coordinate system for formation flight, a common velocity vector for flocking, or agreement on a common dual variable in distributed optimization.
A natural question is how the structure of the graph affects underlying dynamics of the system on the network, and substantial attention to this question has been placed on the consensus protocol.

Of particular interest to the control community are performance characteristics, such as controllability/observability, and performance metrics, such as disturbance \& noise rejection such as the $\mathcal{H}_2$ and $\mathcal{H}_\infty$ norms.
A particularly elegant result is the characterization of symmetries in graphs stemming from graph automorphisms, which yield uncontrollable modes in consensus~\cite{Space2014}.
This was generalized to nonlinear variants of the consensus protocol~\cite{aguilar2014necessary,de2017controllability}.
Algorithms for generating controllable graphs were studied in~\cite{Hudobadebadyn2016}, and extensions of controllability analysis were performed on networks with positive and negative weights, identifying structural balance as the key ingredient on which symmetry imposes uncontrollability~\cite{Alemzadeh2017}.

System norms have physical interpretations on consensus networks.
The $\mathcal{H}_2$ norm can be interpreted as an effective resistance on graphs where the edge weights represent conductances on resistors (the reciprocal of resistance)~\cite{ghosh2008minimizing}.
This resistance interpretation was abused by~\cite{de2020mathcal} to define fast graph optimization algorithms on series-parallel graphs that use series-parallel formulae akin to adding resistances in series/parallel to compute and optimize the $\mathcal{H}_2$ norm.
The $\mathcal{H}_\infty$ norm forms a non-convex optimization problem on node and edge weights, but a tight relaxation was studied in~\cite{farhat2021h}.
Variations of these analyses on matrix-weighted networks (i.e., vector-valued consensus) were performed in~\cite{foight2020performance,de2020mathcal}.

Edge consensus is a variant of the consensus protocol which projects out the null space of the graph Laplacian of connected graphs by considering relative states between nodes, dubbed \emph{edge states}~\cite{zelazo2010edge}.
Optimization problems for edge weights and node weights (the latter also referred to as \emph{timescales}) were studied in~\cite{de2021graph}.
In another work, edge weights and timescales were shown to affect the $\mathcal{H}_2$ norm separately, leading to a \emph{separation principle} where edge weights and timescales can be chosen (or optimized) independently~\cite{foight2019time}.
In the present paper, we delve into a cohomological theory that shows this separation results from the seminal Hodge decomposition of simplicial complexes.

A simplicial complex is a generalization of a graph, and can be viewed as a hypergraph with additional structure imposed on it.
This `additional structure' allows a discrete calculus to be formally defined over simplicial complexes, and therefore symplectic (conservation law-preserving) discrete operators akin to higher-dimensional derivatives can be easily introduced.
This makes simplicial complexes and the topological theory surrounding them an important area of study in many computational domains, such as quantum gravity~\cite{loll2019quantum}, ranking in networks with higher-order interactions~\cite{hirani2010least}, and physics simulations~\cite{grady2010discrete}.

Consensus on simplicial complexes was first studied in~\cite{muhammad2006control}.
Recently, the field of topological signal processing has resulted in simplicial complexes being viewed as a natural data structure for data in which higher-order interactions (i.e., beyond pairwise) affect the dynamics of the system.
Seminal work in~\cite{barbarossa2020topological} studied applications of signals over simplicial complexes, and was extended to the study of finding weighted simplicial complexes from data in~\cite{battiloro2023topological}.
Cohomology and spectral analysis of weighted simplicial complexes and their associated Hodge Laplacian matrices was conducted by~\cite{wu2018weighted,horak2013spectra}.

The primary gap in the literature that this paper seeks to address is an understanding of how weights play a role in the performance of consensus on Laplacian flows over simplicial complexes, and if these weights can be optimized in a manner that is amenable to control-theoretic performance metrics.
The contributions of the present work are as follows:
\begin{enumerate}   \item A framework for weighted consensus dynamics based on weighted Hodge Laplacian flows (extending~\cite{muhammad2006control}, and the literature on consensus dynamics on graphs, eg.~\cite{Mesbahi2010})
    \item A statement and numerical calculation of the Hodge decomposition on weighted simplicial complexes
    \item Optimization problems and analysis thereof to find simplicial complex weights that optimize spectral functions of Hodge Laplacian flows.
\end{enumerate}

This paper is organized as follows.
Notation and preliminaries on simplicial complexes are in \S\ref{sobj:math-prel}.
Theoretical results on weighted simplicial complexes pertinent to studying Laplacian flows on simplicial complexes are presented in \S\ref{sobj:prop-weight-hodge}.
We discuss how to optimize weights on $k$-faces for various spectral functions related to generalizations of control-theoretic performance metrics in \S\ref{sobj:optim-hodge-lapl}.
Numerical results are presented in \S\ref{sobj:numerical-examples}, and the paper is concluded in \S\ref{sobj:concl-future-work}.

\section{Mathematical Preliminaries}
\label{sobj:math-prel}
We follow the notation and terminology of \cite{barbarossa2020topological}.
Note that the notation of~\cite{wu2018weighted,horak2013spectra} differs by an index and transpose in the definition of the boundary operator matrices.
\subsection{Simplicial Complexes}
 Given a set of $N$ points $\{v_0,\dots,v_{N-1}\}$, we define a \emph{$k$-simplex} $\sigma_i^k$ as a set of $k+1$ vertices $\{v_{i_0},\dots, v_{i_{k}}\}$ with $0\leq i_j \leq N-1$ for all $j=0,\dots, N-1$, and $v_{i_j}\neq v_{i_l}$ for all $i_j\neq i_l$.
The \emph{order} of a simplex is one less than its cardinality.
A \emph{face} of $\sigma_i^k$ is a set $\sigma_{i^k}\setminus \{v_{i_j}\}$ for some $v_{i_j}\in\sigma_i^k$, i.e. a $(k-1)$-simplex included in $\sigma_i^k$; each $\sigma_i^k$ has therefore exactly $k+1$ unique faces.
We can thus define a \emph{simplicial complex} $\mathcal{K}$ as a (finite) collection of simplices closed under inclusion: if $\sigma_i^k\in \mathcal{K}$, then each face of $\sigma_i^k\in \mathcal{K}$. 
The \emph{dimension} of a simplicial complex is the highest order over all simplicies in the complex.
The orientation of a face is captured by permutations of its elements -- two orientations are \emph{coherent} if an orientation can be constructed from another by an even number of permutations.
Given a face $\sigma_j^{k-1}\subset\sigma_j^k$, we write $\sigma_j^{k-1} \sim \sigma_i^k$ if the orientation of $\sigma_j^{k-1}$ is coherent with that of $\sigma_i^k$, and $\sigma_j^{k-1} \not\sim \sigma_i^k$ otherwise.

For each $k$, the vector space of (real) linear combinations of oriented $k$-faces of $\mathcal{K}$ is denoted $C_k(\mathcal{K},\mathbb{R})$, and the elements of $C_k(\mathcal{K},\mathbb{R})$ are referred to as \emph{$k$-chains}.
The \emph{boundary operator} is a mapping $\partial_k : C_k(\mathcal{K},\mathbb{R}) \mapsto C_{k-1}(\mathcal{K},\mathbb{R})$ used to define the Laplacian matrices for the Laplacian flows considered in this paper, and is given by
\begin{align}
  \partial_k \left[ v_{i_0},\dots,v_{i_k} \right] = \sum_{j=0}^k (-1)^j  \left[ v_{i_0},\dots,v_{i_k} \right]\setminus \left[ v_{i_j} \right].\label{eq:1}
\end{align}

\subsection{Algebraic Representations of Simplicial Complexes}

In the case of graphs, incidence matrices describe the edges between vertices, in particular which edge is assigned to which pair of indices.
The incidence matrices are then used to construct the graph Laplacian.
This can be generalized to the case of simplicial complexes as follows.

Let $\mathcal{K}$ be a simplicial complex of dimension $K$.
For each $k=1,\dots,K$, the incidence matrix $B_k$ will indicate which $k$-simplices are incident to which of its $k+1$, $(k-1)$-dimensional faces\footnote{
In the graph case, the columns of the incidence matrix indicate which two nodes (0-simplices) are incident to an edge (1-simplices).}.
We define the incidence matrix $B_k$ as,
\begin{align}
    B_k(i,j) = 
  \begin{cases}
    0 & \sigma_i^{k-1} \not\subset \sigma_j^k\\
    1 & \sigma_i^{k-1} \subset \sigma_j^k \text{ and } \sigma_i^{k-1}\sim \sigma_j^k\\
   -1 & \sigma_i^{k-1} \subset \sigma_j^k \text{ and } \sigma_i^{k-1}\not\sim \sigma_j^k\\
  \end{cases}
\end{align}
From \eqref{eq:1}, we can see that $B_kB_{k+1} = {\bf 0}$.
We now define the \emph{Hodge Laplacians} $L_k$ of order $k$ as follows:
\begin{align}
  L_0 &= B_1B_1^T\\
  L_k &= B_k^TB_k + B_{k+1}B_{k+1}^T,~k=1,\dots,K-1\\
  L_K &= B_K^TB_K.
\end{align}
The Hodge Laplacians of order $k=1,\dots,K-1$ contain two terms: the \emph{down Laplacian} $B_k^TB_k$, which encodes adjacency information about $k$-simplices to $(k-1)$-simplicies; and the up Laplacian $B_{k+1}B_{k+1}^T$, which encodes adjacency information about $k$ simplicies to $(k+1)$-simplicies\footnote{The up/down Laplacians are sometimes referred to as the upper/lower Laplacians in the literature}.

\subsection{Weighted Simplicial Complexes}
We first motivate the notion of \emph{weighted} simplicial complexes by introducing the less-general case on graphs.
In consensus on a graph $\mathcal{G}=(\mathcal{N},\mathcal{E})$, weights can be assigned to nodes $n\in \mathcal{N}$ and edges $e\in \mathcal{E}$ to form the \emph{weighted consensus dynamics},
\begin{align}
  \label{eq:2}
  w^n_i\dot x_i = \sum_{ij\in \mathcal{E}}w^e_{ij}(x_j - x_i).
\end{align}
The weights $w^n_i$ represent the \emph{timescale} of a node, and can be used to scale how fast the node integrates information from its neighbours~\cite{foight2019time,foight2020performance}, or to adjust for the relative timescale of the dynamical system under which node $i$ operates~\cite{farhat2021h}.
Node weights and edge weights can be optimized to improve the $\mathcal{H}_2$ and $\mathcal{H}_\infty$ performance metrics~\cite{de2020mathcal,de2021graph}.

In the present work, we extend consensus to \emph{weighted} simplicial complexes, and in doing so we follow the conventions set out in~\cite{wu2018weighted}.
To formally define a weighted simplicial complex, we define a weight function $w:\mathcal{K}\mapsto \mathbb{R}^+$ that assigns a weight $w(\sigma)$ to a face $\sigma \in \mathcal{K}$.
We define $W_i = \mathrm{diag}(w(\sigma_{i^1}),\dots,w(\sigma_{i^{n_i}}))$, where $n_i$ is the number of faces of order $i$ in $\mathcal{K}$.
Then, a natural inner product on simplicial complexes is the inner product on cochains $f,g\in C^i(\mathcal{K},\mathbb{R})$,
\begin{align}
  (f,g)_{C^i} = \sum_{i=1}^{n_i} w(\sigma_i)f(\sigma_i){g}(\sigma_i) = g^T W_i f.
\end{align}
The matrix representations of the boundary operators in the inner product space induced by $(\cdot,\cdot)_{C^i}$ will allow us to define Laplacians that capture a formal notion of consensus on weighted simplicial complexes.

The matrix representations of the weighted boundary operators $\partial_k, \partial_k^\dag$ are given in~\cite{wu2018weighted}.
\begin{theorem}[5.11 in~\cite{wu2018weighted}]
  Let $B_i$ denote the matrix representation of the unweighted boundary operator $\partial_i$, and let $W_i$ denote the (diagonal) matrix of face weights inducing the inner product on $C^i$.
Then, the matrix representation in the inner product space induced by $(\cdot,\cdot)_{C^i}$ of the \emph{weighted} boundary operator $\partial_i^\dag$ is $B_i^T$, and the matrix representation of $\partial_i$ is $W_{i-1}^{-1}B_iW_i$.
\end{theorem}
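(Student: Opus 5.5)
The plan is to identify which of the two operators is defined purely combinatorially, and hence independently of the weights. Its matrix is then fixed by the unweighted incidence structure. The other operator is then its adjoint with respect to the weighted inner products $(\cdot,\cdot)_{C^{i-1}}$ and $(\cdot,\cdot)_{C^{i}}$, and its matrix follows from a short linear-algebra computation.

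\textbf{Step 1: the weight-independent operator.} The coboundary map $\partial_i^\dag : C^{i-1}(\mathcal{K},\mathbb{R})\to C^{i}(\mathcal{K},\mathbb{R})$ is defined by $(\partial_i^\dag f)(\sigma^i) = f(\partial_i \sigma^i)$, using the alternating-sum formula \eqref{eq:1}. This definition involves no weights. Expanding $\partial_i\sigma_j^i$ in terms of the faces $\sigma_l^{i-1}$ gives signed coefficients that are exactly the entries $B_i(l,j)$. Hence $(\partial_i^\dag f)(\sigma_j^i) = \sum_l B_i(l,j) f(\sigma_l^{i-1})$, i.e.\ $\partial_i^\dag$ is represented by $B_i^T$ in the standard bases of indicator cochains. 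These bases are orthogonal, though not orthonormal, for the weighted inner product, so the matrix is unaffected by the weights.

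\textbf{Step 2: the adjoint.} The operator $\partial_i : C^{i}\to C^{i-1}$ is characterised by $(\partial_i^\dag f, g)_{C^i} = (f, \partial_i g)_{C^{i-1}}$ for all $f\in C^{i-1}$ and $g\in C^i$. Let $M$ be the matrix of $\partial_i$ and write both sides using $(f,g)_{C^i}=g^TW_if$. The left side is $g^T W_i B_i^T f$ and the right side is $(Mg)^T W_{i-1} f = g^T M^T W_{i-1} f$. Since this must hold for all $f,g$, we get $W_i B_i^T = M^T W_{i-1}$. Transposing, and using that the $W$'s are diagonal and therefore symmetric, gives $W_{i-1} M = B_i W_i$. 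Because all weights are strictly positive, $W_{i-1}$ is invertible, and so $M = W_{i-1}^{-1} B_i W_i$.

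\textbf{Main obstacle.} The computation itself is routine. The delicate point is the bookkeeping of conventions. One must confirm that the combinatorially defined map is the one represented by $B_i^T$ rather than $B_i$, given the paper's remark that the index and transpose conventions of~\cite{wu2018weighted} differ from those used here. One must also confirm that the inner-product convention $g^TW_if$ places the weights on the correct space. I would settle this by checking the case $i=1$ on a single edge, where $B_1^T$ must produce node differences and the adjoint must reproduce the weighted graph Laplacian structure of \eqref{eq:2}. A useful sanity check is that, as a consequence, $\partial_i\partial_{i+1}=W_{i-1}^{-1}B_iB_{i+1}W_{i+1}={\bf 0}$ still holds.
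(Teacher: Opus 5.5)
Your argument is correct. It also supplies something the paper lacks: the paper gives no proof of this statement and simply imports it as Theorem~5.11 of Wu et al. The real difference from the source is which operator you take as primitive.

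The paper cites Prop.~2.6 of Wu et al.\ separately for $\partial_k\partial_{k+1}=0$. That suggests the weighted boundary is the defined object there, namely $\sigma\mapsto\sum_\tau \pm\frac{w(\sigma)}{w(\tau)}\tau$ over the faces $\tau$ of $\sigma$. This is exactly what $W_{i-1}^{-1}B_iW_i$ encodes entrywise, and $B_i^T$ is then what one computes for its adjoint. You reverse the roles: the unweighted coboundary is primitive, and the weighted boundary comes out as its adjoint via the general formula $A^*=W_{\mathrm{dom}}^{-1}A^TW_{\mathrm{cod}}$ for diagonally weighted spaces.

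Taking adjoints is an involution in finite dimensions. Your identity $W_iB_i^T=M^TW_{i-1}$ therefore just says that the two matrices are mutually adjoint for $(\cdot,\cdot)_{C^{i-1}}$ and $(\cdot,\cdot)_{C^{i}}$, so it proves the theorem under either convention. Your route is self-contained, and it turns $\partial_i\partial_{i+1}=W_{i-1}^{-1}B_iB_{i+1}W_{i+1}=\mathbf{0}$ and the formulas for $\Lup_k,\Ldn_k$ in Corollary~\ref{cor:lap} into one-line consequences rather than further citations. The source's route starts from a weighted \emph{chain} complex, which is the natural object for the weighted (co)homology Wu et al.\ are developing.

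Two presentational fixes are worth making. First, you use $\partial_i$ for two different maps. In Step~1 it must be the unweighted alternating sum, while in Step~2 it is the weighted operator with matrix $W_{i-1}^{-1}B_iW_i$. Give them different symbols, because the content lies in this distinction: the algebraic dual $f\mapsto f\circ\partial_i$ of the \emph{weighted} boundary has matrix $W_iB_i^TW_{i-1}^{-1}$, not $B_i^T$. Only its adjoint with respect to the weighted inner products is weight-free.

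Second, the remark that the indicator bases are orthogonal is beside the point. The matrix of a linear map in a fixed basis never depends on an inner product. What matters is that ``matrix representation'' means with respect to indicator bases, which is how the paper uses it in Corollary~\ref{cor:lap}. In the orthonormal bases $e_j/\sqrt{w(\sigma_j)}$ the two operators would instead be $W_i^{1/2}B_i^TW_{i-1}^{-1/2}$ and its transpose.
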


\begin{corollary}
\label{cor:lap}
Noting that $B_i = 0$ for $i<0$ and $i>k$ for a $k$-dimensional simplicial complex, the $k$th weighted up/down Laplacians are respectively,
  \begin{align}
    \Ldn_k &= B_{k}^TW_{k-1}^{-1}B_{k}W_k,~
             \Lup_k = W_k^{-1}B_{k+1}W_{k+1}B_{k+1}^T,
  \end{align}
  and the $k$th weighted Hodge Laplacian is therefore,
  \begin{align}
    L_k &= \Lup_k + \Ldn_k.
  \end{align} 
\end{corollary}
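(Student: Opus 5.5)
The plan is to build the weighted Laplacians directly from the Theorem (5.11 in~\cite{wu2018weighted}), mirroring the unweighted construction. In the unweighted case the Hodge Laplacian is $L_k = \partial_k^\dag\partial_k + \partial_{k+1}\partial_{k+1}^\dag$, realized as $B_k^TB_k + B_{k+1}B_{k+1}^T$. I would carry this abstract definition over verbatim to the weighted setting and take $\Ldn_k := \partial_k^\dag\partial_k$ and $\Lup_k := \partial_{k+1}\partial_{k+1}^\dag$. Both are operators on $C^k$ equipped with the inner product $(\cdot,\cdot)_{C^k}$, and for each I substitute the matrix representations supplied by the theorem.

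\textbf{Step 1: composition.} The representation of a composite operator is the product of the representations, taken in the same order, provided every factor is written with respect to the same bases. This holds here because the change of inner product alters only the adjoint; the basis of elementary (co)chains is unchanged. I will state this point explicitly, since it is the only place where care is needed.

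\textbf{Step 2: the down Laplacian.} Indices must be read carefully here. The theorem is phrased with $B_i$ mapping degree-$i$ objects to degree-$(i-1)$ ones, and with weights $W_{i-1}$ on the target and $W_i$ on the source. Composing the factor $W_{k-1}^{-1}B_kW_k$ with the factor $B_k^T$ in the order matching the domains gives
\[
\Ldn_k = B_k^TW_{k-1}^{-1}B_kW_k .
\]
\textbf{Step 3: the up Laplacian.} Similarly, composing $W_k^{-1}B_{k+1}W_{k+1}$ with $B_{k+1}^T$ gives
\[
\Lup_k = W_k^{-1}B_{k+1}W_{k+1}B_{k+1}^T .
\]

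\textbf{Step 4: boundary cases and sum.} I then invoke the convention $B_i=0$ for $i<0$ and $i>K$, taken as $B_0 = 0$ and $B_{K+1}=0$. This makes $\Ldn_0=0$ and $\Lup_K=0$, so the formula reduces to the stated $L_0$ and $L_K$. The identity $L_k=\Lup_k+\Ldn_k$ then holds for every $k$ by definition.

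\textbf{Sanity checks.} Setting all $W_i=I$ recovers the unweighted $L_k$. Moreover, a direct computation gives $(\Ldn_k f, g)_{C^k} = g^TW_kB_k^TW_{k-1}^{-1}B_kW_kf$, which is symmetric in $f$ and $g$, so $\Ldn_k$ is self-adjoint with respect to $(\cdot,\cdot)_{C^k}$; the same holds for $\Lup_k$. This confirms that the assignment of transposes and inverses is consistent.

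\textbf{Main obstacle.} The hard part is bookkeeping, not anything conceptual. The theorem's labeling of $\partial_i$ versus $\partial_i^\dag$ follows the cochain convention of~\cite{wu2018weighted}, which differs by a transpose and an index shift from the $B_k$ convention of~\cite{barbarossa2020topological} used here. First I would fix, for each of the two factors, its domain and codomain together with the diagonal weight matrix attached to each. I would then check that the resulting product is self-adjoint in $(\cdot,\cdot)_{C^k}$, as above. If the order of factors came out reversed, the self-adjointness test would detect it, and I would swap which factor plays the role of the adjoint.
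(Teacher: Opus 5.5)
Your proposal is correct and is essentially the paper's own argument: the paper defines $\Lup_k$ and $\Ldn_k$ as the matrix representations of $\partial_{k+1}\partial_{k+1}^\dag$ and $\partial_k^\dag\partial_k$, then multiplies the representations from Theorem~5.11 of~\cite{wu2018weighted}, exactly as in your Steps 2--3. Your extra bookkeeping is a harmless and slightly more careful elaboration: the self-adjointness check in $(\cdot,\cdot)_{C^k}$, the recovery of the unweighted case, and reading the boundary convention as $B_0=0$, $B_{K+1}=0$ rather than the statement's literal ``$i<0$''.
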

\begin{proof}
  We define the $\Lup_k,\Ldn_k$ as the matrix representations of $\partial_{k+1}\partial_{k+1}^\dag$ and $\partial_k^\dag\partial_k$ respectively.
\end{proof}

\subsubsection{Example of a Simplical Complex and Hodge Laplacians}
Consider an example of a simplex with faces ($k=2$).
The three Hodge Laplacians are,
\begin{align}
    L_0&= W_0^{-1}B_1W_1B_1^T \\
    L_1&= W_1^{-1}B_2W_2B_2^T + B_1^TW_0^{-1}B_1W_1\\
    L_2&= B^T_2W_1^{-1}B_2W_2.
\end{align}
Note that the dynamics on $x^0$,
\begin{align}
  \dot x^0 = -L_0x^0 = -W_0^{-1}B_1W_1B_1^T x^0
\end{align}
can be written as,
\begin{align}
  W_0\dot x^0 = -B_1W_1B_1^T x^0,
\end{align}
which are precisely the weighted consensus dynamics in Equation~\eqref{eq:2}.
Optimizing the weights $W_0$ and $W_1$ with respect to the $\mathcal{H}_2$ norm was discussed in~\cite{foight2019time}.
\subsubsection{Example of a Simplicial Complex and Edge Consensus}
Consider an example of a simplex with edges ($k=1$).
The two Hodge Laplacians are,
\begin{align}
    L_0&= W_0^{-1}B_1W_1B_1^T \\
    L_1&=  B_1^TW_0^{-1}B_1W_1.
\end{align}
The Laplacian $L_1$ is called the \emph{edge Laplacian}, and edge consensus has been studied extensively in\cite{zelazo2010edge}.
Optimizing weights $W_0$ and $W_1$ with respect to the $\mathcal{H}_2$ norm in this setting was analyzed in~\cite{de2021graph}.

\section{Properties of Weighted Hodge Laplacians}
\label{sobj:prop-weight-hodge}
In this section, we discuss the spectral properties of weighted Hodge Laplacians, beginning with properties of the eigenvalues and eigenvectors thereof.
We then discuss the numerical computation of the Hodge decomposition, which allows the decomposition of $k$-Laplacian flows into dynamics on the $(k-1)$-simplices and $(k+1)$-simplices, and the nullspace of $L_k$.

\begin{lemma}[Prop.~2.6 in \cite{wu2018weighted}]
  Let $\partial_k$ denote the weighted $k$-boundary operator. Then,
  $\partial_k\partial_{k+1} = 0$.
\end{lemma}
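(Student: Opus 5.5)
The plan is to reduce the weighted identity to the unweighted one, $B_kB_{k+1}=\mathbf{0}$, which was noted after the definition of the incidence matrices as a consequence of \eqref{eq:1}. By Theorem 5.11 of~\cite{wu2018weighted} (stated above), in the inner product space induced by $(\cdot,\cdot)_{C^i}$ the weighted boundary operator $\partial_i$ has matrix representation $W_{i-1}^{-1}B_iW_i$. It therefore suffices to check that the product of the matrix representations of $\partial_k$ and $\partial_{k+1}$ vanishes.

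We compose the two representations directly:
\begin{align}
  \left(W_{k-1}^{-1}B_kW_k\right)\left(W_k^{-1}B_{k+1}W_{k+1}\right)
  = W_{k-1}^{-1}B_k\left(W_kW_k^{-1}\right)B_{k+1}W_{k+1}
  = W_{k-1}^{-1}\left(B_kB_{k+1}\right)W_{k+1}.
\end{align}
This product is $W_{k-1}^{-1}\,\mathbf{0}\,W_{k+1}=\mathbf{0}$. The weights $W_k$ are diagonal with entries in $\mathbb{R}^+$, so $W_k^{-1}$ exists and the cancellation of the inner weight matrix is legitimate. A linear map whose matrix in fixed bases is zero is the zero map, which gives $\partial_k\partial_{k+1}=0$.

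The main point requiring care is verifying $B_kB_{k+1}=\mathbf{0}$ itself, since the paper asserts it only briefly. I would check it from \eqref{eq:1}. Applying $\partial_k$ to $\partial_{k+1}[v_{i_0},\dots,v_{i_{k+1}}]$ produces each $(k-1)$-face obtained by deleting two vertices $v_{i_j},v_{i_l}$ with $j<l$ exactly twice. Deleting $v_{i_j}$ first and then $v_{i_l}$ contributes the sign $(-1)^{j}(-1)^{l-1}$, because $v_{i_l}$ has shifted down one position. Deleting in the other order contributes $(-1)^{l}(-1)^{j}$. These two signs are opposite, so the terms cancel pairwise.

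One boundary convention should also be addressed: the edge cases $k=0$ and $k=K$, where $B_0$ or $B_{K+1}$ is zero by the convention in Corollary~\ref{cor:lap}. In those cases the product is trivially zero.
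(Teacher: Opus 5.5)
Your proof is correct. The paper gives no argument of its own for this lemma; it simply imports it as Prop.~2.6 of~\cite{wu2018weighted}. Your derivation is therefore a self-contained replacement. It uses only the matrix representation of Theorem~5.11 and the unweighted identity $B_kB_{k+1}=\mathbf{0}$, and your sign check for that identity is correct.

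Entrywise, your cancellation of $W_kW_k^{-1}$ says that the $(\tau,\sigma)$ entry of $W_{k-1}^{-1}B_kW_k$ is $\pm w(\sigma)/w(\tau)$. So the weight picked up along either route from a $(k+1)$-simplex $\sigma$ down to a $(k-1)$-face $\rho$ telescopes to the same factor $w(\sigma)/w(\rho)$, and the two routes still cancel by sign.

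Your matrix route also gives something structural that the citation does not make visible. The identity $W_{k-1}\bigl(W_{k-1}^{-1}B_kW_k\bigr)=B_kW_k$ says the invertible diagonal maps $W_k$ form a chain isomorphism between the weighted and unweighted complexes. Hence weighted homology does not depend on the (positive) weights, and by the weighted Hodge decomposition neither does the number of zero eigenvalues of $L_k$. The paper later asserts this without proof in the smallest-nonzero-eigenvalue SDP, where it treats the zero eigenvalues of $L_k$ as weight-independent.

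One trivial point: Corollary~\ref{cor:lap} literally only sets $B_i=0$ for $i<0$ and for $i$ above the dimension. For $k=0$ you should instead appeal to the standard convention $\partial_0=0$, which is consistent with $L_0$ having no down term. The case is then immediate, as you say.
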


\begin{theorem}[Weighted analogue of Prop.~1 in~\cite{barbarossa2020topological}]
  \label{thr:1}
The Laplacian matrices $L_k$ and the associated up/down Laplacians $\Lup_k,\Ldn_k$ satisfy:
\begin{enumerate}
  \item the eigenvectors of the nonzero eigenvalues of $\Ldn_k$ are orthogonal to the eigenvectors of nonzero eigenvalues of $\Lup_k$, and vice versa.
  \item If $v$ is an eigenvector of $\Lup_{k-1}$, then $B^T_kv$ is an eigenvector of $\Ldn_k$ with the same eigenvalue.
  \item The eigenvectors associated with the nonzero eigenvalues $\lambda$ of $L_k$ are either the eigenvectors of $\Ldn_k$ or $\Lup_k$.
  \item The nonzero eigenvalues of $L_k$ are either the eigenvalues of $\Ldn_k$ or $\Lup_k$.
\end{enumerate}
\end{theorem}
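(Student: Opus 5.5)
The approach is to reduce everything to two facts: the annihilation identities $\Lup_k\Ldn_k=\Ldn_k\Lup_k=0$, and self-adjointness of $\Lup_k,\Ldn_k$ with respect to the weighted inner product $(\cdot,\cdot)_{C^k}$, i.e.\ $x^TW_ky$. The weighted Laplacians of Corollary~\ref{cor:lap} are not symmetric, so Euclidean arguments do not apply directly. The right notion of orthogonality in item~1 is therefore $W_k$-orthogonality, and I would state the result in that sense. When $W_k$ is not a multiple of the identity, Euclidean orthogonality should not be expected.

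\emph{Step 1 (products vanish).} From $B_kB_{k+1}=0$ and Corollary~\ref{cor:lap},
\begin{align}
\Ldn_k\Lup_k = B_k^TW_{k-1}^{-1}B_kW_kW_k^{-1}B_{k+1}W_{k+1}B_{k+1}^T = 0,
\end{align}
and $\Lup_k\Ldn_k=0$ likewise, using $B_{k+1}^TB_k^T=(B_kB_{k+1})^T=0$. Alternatively, this follows from the preceding Lemma $\partial_k\partial_{k+1}=0$.

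\emph{Step 2 (self-adjointness and diagonalizability).} The matrices $W_k\Ldn_k=W_kB_k^TW_{k-1}^{-1}B_kW_k$ and $W_k\Lup_k=B_{k+1}W_{k+1}B_{k+1}^T$ are symmetric positive semidefinite. Hence $W_k^{1/2}\Ldn_kW_k^{-1/2}$, $W_k^{1/2}\Lup_kW_k^{-1/2}$ and $W_k^{1/2}L_kW_k^{-1/2}$ are symmetric. It follows that all three operators are diagonalizable with real nonnegative eigenvalues and $W_k$-orthogonal eigenbases.

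\emph{Item 1.} Suppose $\Ldn_kx=\lambda x$ and $\Lup_ky=\mu y$ with $\lambda,\mu\neq0$. Write $x=\Ldn_kx/\lambda$ and $y=\Lup_ky/\mu$. Using $(\Lup_k)^TW_k=W_k\Lup_k$,
\begin{align}
y^TW_kx=\tfrac{1}{\lambda\mu}\,y^TW_k\Lup_k\Ldn_kx=0.
\end{align}

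\emph{Item 2.} Suppose $\Lup_{k-1}v=\lambda v$, i.e.\ $W_{k-1}^{-1}B_kW_kB_k^Tv=\lambda v$. Multiplying on the left by $B_k^T$ gives
\begin{align}
\Ldn_k(B_k^Tv)=\lambda B_k^Tv.
\end{align}
We must also check that $B_k^Tv\neq0$. This holds exactly when $\lambda\neq0$, since $B_k^Tv=0$ forces $\lambda v=0$. So the claim should be read for nonzero eigenvalues.

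\emph{Items 3--4.} First, a nonzero-eigenvalue eigenvector $x$ of $\Ldn_k$ is one of $L_k$. Indeed $\Lup_kx=\Lup_k\Ldn_kx/\lambda=0$, so $L_kx=\lambda x$; the symmetric argument handles $\Lup_k$. Conversely, suppose $L_kx=\lambda x$ with $\lambda\neq0$. Applying $\Ldn_k$ and using Step~1 gives $\Ldn_k(\Ldn_kx)=\lambda\,\Ldn_kx$, and similarly for $\Lup_k$. Moreover
\begin{align}
x=\tfrac{1}{\lambda}\left(\Ldn_kx+\Lup_kx\right).
\end{align}
So $x$ is a sum of an eigenvector of $\Ldn_k$ and one of $\Lup_k$, each with eigenvalue $\lambda$ (or zero). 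At least one summand is nonzero, which gives item~4 immediately.

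The main obstacle is the precise meaning of ``either'' in item~3. If $\lambda$ is simultaneously an eigenvalue of $\Ldn_k$ and $\Lup_k$, an individual eigenvector of $L_k$ may be a genuine mixture of the two. I would therefore prove the statement in the form that every nonzero eigenspace of $L_k$ is the ($W_k$-orthogonal) direct sum of the corresponding eigenspaces of $\Ldn_k$ and $\Lup_k$. Consequently, an eigenbasis of $L_k$ can be chosen whose nonzero-eigenvalue vectors are each eigenvectors of exactly one of $\Ldn_k,\Lup_k$, as Step~2 guarantees.
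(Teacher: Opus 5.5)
Your proof is correct and follows the same route as the paper: the annihilation identity $\Lup_k\Ldn_k=\Ldn_k\Lup_k=0$ obtained from $B_kB_{k+1}=0$, and the intertwining computation $\Ldn_kB_k^T=B_k^T\Lup_{k-1}$ for item~2. Your additions fill what the paper's terse proof omits, and they are needed for a true statement. The $W_k$-self-adjointness turns the paper's conclusion $\Lup_kv=0$ into an actual orthogonality result, which holds in the $W_k$ inner product but generally fails in the Euclidean one. You also correctly impose the restriction $\lambda\neq0$ in item~2, and the direct-sum reading of item~3 is the right fix when $\Ldn_k$ and $\Lup_k$ share a nonzero eigenvalue.
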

\begin{proof}
  Property 1 follows from a simple calculation. Suppose $\Ldn_k v = \lambda v$.
Then,
\begin{align}
  \Lup_k\lambda v &= \Lup_k\Ldn_k v\\
 &= W_k^{-1}B_{k+1}W_{k+1}\underbrace{B_{k+1}^TB_k^T}_{=0} W_{k+1}^{-1} B_kW_kv=0.
\end{align}
For Property 2, suppose that $\Lup_{k-1} v = \lambda v$.
Then,
\begin{align}
  \Ldn_k B^T_k v = B_k^TW_{k-1}^{-1}B_kW_kB_k^Tv = B_k^T\Lup_{k-1}v = \lambda B_k^T\lambda v.
\end{align}

Properties 3 \& 4 follow directly from Property 1.
\end{proof}

\begin{theorem}{(Hodge Decomposition of Weighted Simplicial Complexes)}
    \label{thm:hodge_decomp}
    Let $s^k \in C_k(\mathcal{K},\mathbb{R})$ be a $k$-chain on a simplicial complex $\mathcal{K}$.
    The vector space $\mathbb{R}^{D_k}$, where $D_k$ is the number of simplices of order $k$ in $\mathcal{K}$, can be decomposed as,
    \begin{align}
      \mathbb{R}^{D_k} = \mathrm{Im}(\partial_k^\dag)\oplus\mathrm{Ker}(L_k)\oplus\mathrm{Im}(\partial_{k+1})
    \end{align}
    and therefore a signal $s^k$ defined on the $k$-simplices of $\mathcal{K}$ can be decomposed as,
    \begin{align}
      s^k = B_k^Ts^{k-1} +s^k_H + W_k^{-1}B_{k+1}W_{k+1}s^{k+1}.
    \end{align}
\end{theorem}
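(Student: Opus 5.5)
Throughout, I equip $\mathbb{R}^{D_k}$ with the weighted inner product $(f,g)_{C^k}=g^TW_kf$; the decomposition in the statement is meant to be orthogonal in this inner product. The goal is to reduce to the standard finite-dimensional Hodge argument. Three ingredients make it work: adjointness, the chain-complex identity $\partial_k\partial_{k+1}=0$ (the Lemma, Prop.~2.6 of~\cite{wu2018weighted}), and self-adjointness of $L_k$ in this inner product.

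Concretely, the weighted Laplacian of Corollary~\ref{cor:lap} is $L_k=\partial_k^\dag\partial_k+\partial_{k+1}\partial_{k+1}^\dag$, where $\dagger$ denotes the adjoint with respect to $(\cdot,\cdot)_{C^{k-1}},(\cdot,\cdot)_{C^k},(\cdot,\cdot)_{C^{k+1}}$. I will work with the matrix representations supplied by Theorem~5.11 of~\cite{wu2018weighted}, read so that $\mathrm{Im}(\partial_k^\dag)=\mathrm{Im}(B_k^T)$ and $\mathrm{Im}(\partial_{k+1})=\mathrm{Im}(W_k^{-1}B_{k+1}W_{k+1})$. With that identification, the three summands in the claimed decomposition of $s^k$ land in the three claimed subspaces.

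The proof has three steps.

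\textbf{Step 1 (orthogonality of the two image spaces).} For any $a,b$,
\[(B_k^Ta,\;W_k^{-1}B_{k+1}W_{k+1}b)_{C^k}=b^TW_{k+1}B_{k+1}^TB_k^Ta=0,\]
because $B_kB_{k+1}=0$. This is the same computation as Property~1 of Theorem~\ref{thr:1}.

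\textbf{Step 2 (identify the kernel).} I show that $\mathrm{Ker}(L_k)=\mathrm{Ker}(\partial_k)\cap\mathrm{Ker}(\partial_{k+1}^\dag)$. Take $x$ with $L_kx=0$ and pair it with itself in the weighted inner product:
\[0=(L_kx,x)_{C^k}=\|\partial_kx\|^2+\|\partial_{k+1}^\dag x\|^2 .\]
The norms on the right are the weighted norms on $C^{k-1}$ and $C^{k+1}$. Expanded in matrices, this is a sum of two quadratic forms such as $x^TW_kB_k^TW_{k-1}^{-1}B_kW_kx$, each positive semidefinite since every $W_i\succ0$. Both must therefore vanish, giving $B_kW_kx=0$ and $B_{k+1}^Tx=0$. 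The reverse inclusion is immediate.

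\textbf{Step 3 (complement and direct sum).} Next I show that the orthogonal complement of $\mathrm{Im}(\partial_k^\dag)\oplus\mathrm{Im}(\partial_{k+1})$ is exactly $\mathrm{Ker}(L_k)$. Standard adjoint facts give $\mathrm{Im}(\partial_k^\dag)^\perp=\mathrm{Ker}(\partial_k)$ and $\mathrm{Im}(\partial_{k+1})^\perp=\mathrm{Ker}(\partial_{k+1}^\dag)$, all complements taken in the $W_k$ inner product. Intersecting these and using Step~2 gives the claim. Combined with the orthogonality from Step~1, this yields the three-way orthogonal direct sum $\mathbb{R}^{D_k}=\mathrm{Im}(\partial_k^\dag)\oplus\mathrm{Ker}(L_k)\oplus\mathrm{Im}(\partial_{k+1})$.

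The formula for $s^k$ then follows by writing its three orthogonal projections as, respectively, $B_k^Ts^{k-1}$, a harmonic part $s^k_H$, and $W_k^{-1}B_{k+1}W_{k+1}s^{k+1}$, for suitable chains $s^{k-1},s^{k+1}$.

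The main obstacle is bookkeeping of weights. One must check that the matrix forms in Corollary~\ref{cor:lap} really are adjoint pairs under the weighted inner products. The transpose of a matrix is not its adjoint here, and the quoted conventions of Theorem~5.11 differ by an index and a transpose from the ones used in this paper. I would verify the adjoint identity $(\partial x,y)_{C^{k-1}}=(x,\partial^\dag y)_{C^k}$ explicitly for the chosen representatives before running Steps~1–3. If the conventions clash, I would instead run the argument in the symmetrized coordinates $W_k^{1/2}x$. In those coordinates $L_k$ is similar to the symmetric matrix $W_k^{1/2}L_kW_k^{-1/2}$, so the classical unweighted Hodge decomposition applies there directly, and I would map the result back.
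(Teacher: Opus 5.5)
Your proof is correct, but it is organized differently from the paper's. The paper gives no standalone proof of this theorem; it treats it as the weighted analogue of the classical Hodge decomposition. Its only supporting argument is the proof of the subsequent Theorem~\ref{thm:num_hodge_decomp}. There, $s^{k-1}$ and $s^{k+1}$ are obtained as solutions of the normal equations of two $W_k$-weighted least-squares problems, equivalently of two Laplacian systems. The paper then invokes Property~1 of Theorem~\ref{thr:1}---the same $B_kB_{k+1}=0$ computation as your Step~1---to conclude that the harmonic remainder is orthogonal to both pieces. You instead argue via adjoints and orthogonal complements, and your Step~2 characterization $\mathrm{Ker}(L_k)=\mathrm{Ker}(\partial_k)\cap\mathrm{Ker}(\partial_{k+1}^\dag)$ has no counterpart in the paper.

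\emph{What the paper's route buys:} a computational recipe for the potentials $s^{k\pm1}$, which your existence argument (``for suitable chains'') does not supply.

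\emph{What your route buys:} completeness. The normal equations together with Step~1 only show that the residual $s^k-B_k^Ts^{k-1}-W_k^{-1}B_{k+1}W_{k+1}s^{k+1}$ is $W_k$-orthogonal to both image spaces. It is precisely your Steps~2--3 that place this residual in $\mathrm{Ker}(L_k)$, a step the paper leaves implicit. You also make explicit that the decomposition is orthogonal in the $W_k$ inner product, not the Euclidean one.

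The convention issue you flag as the main obstacle is only a one-line check: $(W_{k-1}^{-1}B_kW_kx,y)_{C^{k-1}}=y^TB_kW_kx=(x,B_k^Ty)_{C^k}$, and likewise $(W_k^{-1}B_{k+1}W_{k+1}z,x)_{C^k}=x^TB_{k+1}W_{k+1}z=(z,B_{k+1}^Tx)_{C^{k+1}}$. So the representations quoted from~\cite{wu2018weighted} are genuine adjoint pairs in this paper's notation, and the symmetrized-coordinate fallback is unnecessary.
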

We can compute the Hodge decomposition by extending the result in \cite{hirani2010least} to the weighted Laplacian case.
\begin{theorem}[Weighted analog of Thm 6.2 in~\cite{hirani2010least}]
    \label{thm:num_hodge_decomp}
    The following are equivalent:
    \begin{enumerate}
      \item There exists a decomposition of a signal $s^k\in C_i(\mathcal{K},\mathbb{R})$ on $k$ faces of the form,
        \begin{align}
           s^k &= B_k^Ts^{k-1} +s^k_H + W_k^{-1}B_{k+1}W_{k+1}s^{k+1}\\
           s^k_H &\in \mathrm{Ker}(L_k)
        \end{align}
        \item There exists a solution $\{\alpha,\beta\} = \{s^{k-1},s^{k+1}\}$ to the equations,
          \begin{align}
            \Lup_{k}\alpha &= W_{k-1}^{-1}B_kW_k s^k \label{eq:lap1}\\
            \Ldn_{k+1}\beta &= B_{k+1}^T s^k \label{eq:lap2}
          \end{align}
        \item The following pair of least squares problems has the solution $\{\alpha,\beta\} = \{s^{k-1},s^{k+1}\}$, where $\|s^k\|_{W_k}^2 = (s^k,s^k)_{W_k}$ is the norm induced by the $k$-cochain inner product $(\cdot,\cdot)_{W_k}$: 
          \begin{align}
            &\begin{array}{ll}
               \min_\alpha & \| \partial_1\alpha - s^k\|_{W_k}^2
             \end{array} \label{eq:ls1} \\ 
            &\begin{array}{ll} 
               \min_\beta & \| \partial_2^T\beta - s^k\|_{W_k}^2 
             \end{array} \label{eq:ls2}
          \end{align}
    \end{enumerate}
    
\end{theorem}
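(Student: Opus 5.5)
The proof reduces everything to the weighted normal equations together with the orthogonality of the three Hodge summands in the $W_k$ inner product. Throughout, I read the statement with the indices that make it dimensionally consistent. The $\alpha$-equation is $\Lup_{k-1}\alpha = W_{k-1}^{-1}B_kW_ks^k$, where $\Lup_{k-1}=W_{k-1}^{-1}B_kW_kB_k^T$. The two least squares problems are $\min_\alpha\|B_k^T\alpha - s^k\|_{W_k}^2$ and $\min_\beta\|W_k^{-1}B_{k+1}W_{k+1}\beta - s^k\|_{W_k}^2$. By the boundary-operator theorem (5.11 in~\cite{wu2018weighted}), these are exactly the matrix representations of $\partial_k^\dag\alpha$ and $\partial_{k+1}\beta$ appearing in Theorem~\ref{thm:hodge_decomp}. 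Reconciling these indices is the first step, and I expect it to be the main bookkeeping obstacle.

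\emph{Step 1: (2)$\Leftrightarrow$(3).} Both objectives are convex quadratics, so a minimizer is characterized by the vanishing of the gradient.
\begin{itemize}
\item For $\alpha$, the gradient condition is $B_kW_k(B_k^T\alpha - s^k)=0$. Multiplying on the left by $W_{k-1}^{-1}$ gives exactly $\Lup_{k-1}\alpha = W_{k-1}^{-1}B_kW_ks^k$.
\item For $\beta$, the gradient condition is $W_{k+1}B_{k+1}^TW_k^{-1}W_k(W_k^{-1}B_{k+1}W_{k+1}\beta - s^k)=0$. Cancelling the invertible $W_{k+1}$ on the left gives $B_{k+1}^TW_k^{-1}B_{k+1}W_{k+1}\beta = B_{k+1}^Ts^k$. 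By Corollary~\ref{cor:lap} this is $\Ldn_{k+1}\beta = B_{k+1}^Ts^k$.
\end{itemize}
So the solution sets coincide. Both are nonempty, because a least squares problem always attains its minimum.

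\emph{Step 2: the harmonic space.} I will show that $\mathrm{Ker}(L_k)=\mathrm{Ker}(B_kW_k)\cap\mathrm{Ker}(B_{k+1}^T)$. The operators $W_k\Ldn_k = W_kB_k^TW_{k-1}^{-1}B_kW_k$ and $W_k\Lup_k = B_{k+1}W_{k+1}B_{k+1}^T$ are symmetric positive semidefinite. Hence $\Lup_k$ and $\Ldn_k$ are self-adjoint and positive semidefinite in $(\cdot,\cdot)_{W_k}$. Now suppose $L_kx=0$. Then
\[
0 = x^TW_kL_kx = \|W_{k-1}^{-1/2}B_kW_kx\|^2 + \|W_{k+1}^{1/2}B_{k+1}^Tx\|^2 ,
\]
which forces $B_kW_kx=0$ and $B_{k+1}^Tx=0$. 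The reverse inclusion is immediate from the formulas for $\Lup_k$ and $\Ldn_k$.

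\emph{Step 3: (1)$\Rightarrow$(2).} Start from a decomposition as in (1).
\begin{itemize}
\item Apply $W_{k-1}^{-1}B_kW_k$ to it. The up term dies because $B_kB_{k+1}=0$ (the preceding lemma), and the harmonic term dies by Step 2. What remains is the $\alpha$-equation with $\alpha=s^{k-1}$.
\item Apply $B_{k+1}^T$ to it. The down term dies because $B_{k+1}^TB_k^T=0$, and the harmonic term dies by Step 2. What remains is the $\beta$-equation with $\beta=s^{k+1}$.
\end{itemize}

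\emph{Step 4: (2)$\Rightarrow$(1).} Given solutions $\alpha,\beta$, define
\[
s_H := s^k - B_k^T\alpha - W_k^{-1}B_{k+1}W_{k+1}\beta .
\]
Apply $B_kW_k$ to $s_H$. The $\alpha$-equation cancels the $\alpha$ term, and $B_kB_{k+1}=0$ kills the $\beta$ term, so $B_kW_ks_H=0$. Apply $B_{k+1}^T$ to $s_H$. The $\beta$-equation cancels the $\beta$ term, and $B_{k+1}^TB_k^T=0$ kills the $\alpha$ term, so $B_{k+1}^Ts_H=0$. By Step 2, $s_H\in\mathrm{Ker}(L_k)$, which gives the decomposition in (1) with $s^{k-1}=\alpha$ and $s^{k+1}=\beta$.

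Apart from the index reconciliation at the start, the one genuinely nontrivial ingredient is Step 2. It relies on $W_k$-self-adjointness: the weighted Laplacians are not symmetric as matrices, so one must work with $W_kL_k$ rather than $L_k$ itself.
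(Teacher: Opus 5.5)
Your proof is correct, but after Step 1 it takes a different route from the paper. Your index reconciliation ($\Lup_{k-1}$ in place of $\Lup_k$, and the least-squares operators $B_k^T$ and $W_k^{-1}B_{k+1}W_{k+1}$ in place of $\partial_1,\partial_2^T$) is the reading the paper's own proof uses implicitly. Your Step 1 matches the paper: it writes the optimality conditions as $\partial_k\partial_k^\dag\alpha=\partial_k s^k$ and $\partial_{k+1}^\dag\partial_{k+1}\beta=\partial_{k+1}^\dag s^k$ and substitutes Corollary~\ref{cor:lap}.

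You diverge in how you connect (2) to the decomposition (1). The paper observes that $B_k^T\alpha\in\mathrm{Im}(\partial_k^\dag)$ and $W_k^{-1}B_{k+1}W_{k+1}\beta\in\mathrm{Im}(\partial_{k+1})$. It then appeals to the up/down orthogonality in item 1 of Theorem~\ref{thr:1} to conclude that the harmonic part is orthogonal to both. In effect, it treats the least-squares solutions as orthogonal projections onto the two image spaces, with a harmonic remainder. You instead prove $\mathrm{Ker}(L_k)=\mathrm{Ker}(B_kW_k)\cap\mathrm{Ker}(B_{k+1}^T)$ from the sum-of-squares form of $x^TW_kL_kx$. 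You then verify both implications directly by applying $B_kW_k$ and $B_{k+1}^T$ and using $B_kB_{k+1}=0$.

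The paper's route is shorter and puts the orthogonality of the three summands at the center. However, it never actually shows that the residual $s^k-B_k^T\alpha-W_k^{-1}B_{k+1}W_{k+1}\beta$ lies in $\mathrm{Ker}(L_k)$. Orthogonality of the summands is a property of a decomposition once you have one; it does not by itself show the remainder is harmonic. Also, the orthogonality it cites holds in the $W_k$ inner product, not the Euclidean one, because the weighted Laplacians are not symmetric matrices.

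Your route is self-contained and does not need Theorem~\ref{thr:1} at all. It establishes the equivalence pair by pair for $(\alpha,\beta)$, and it makes explicit that one must work with $W_kL_k$ rather than $L_k$.
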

\begin{proof}
  The optimality conditions of Equations~\eqref{eq:ls1} and~\eqref{eq:ls2} are respectively, 
\begin{align}
  \partial_k\partial_k^T \alpha &= \partial_k s^k\\
  \partial_{k+1}^T\partial_{k+1} \beta &= \partial_{k+1}^T s^k,
\end{align}
and by substituting the identities in Corollary~\ref{cor:lap}, we have that
\begin{align}
  \Lup_k \alpha &= W_{k-1}^{-1}B_kW_k s^k\label{eq:int1}\\
  \Ldn_{k+1} \beta &= B_{k+1}^T s^k.\label{eq:int2}
\end{align}
It follows that $B_k^T\alpha \in \mathrm{Im}(\partial_k^\dag)$ and $W_k^{-1}B_{k+1}W_{k+1}\beta \in \mathrm{Im}(\partial_{k+1})$.
By Theorem~\ref{thr:1}.1, it follows that $s^K_H\in \mathrm{ker}(L_k)$ is orthogonal to both $B_k^Ts^{k-1}$ and $W_k^{-1}B_{k+1}W_{k+1}s^{k+1}$, completing the proof.


\end{proof}

\section{Optimizing Hodge Laplacian Flows for Simplicial Complexes}
\label{sobj:optim-hodge-lapl}
In this section, we consider the problem of optimizing Hodge Laplacian flows for simplicial complexes. We show that two key spectral functions of the Hodge Laplacian, the trace of the pseudoinverse and the smallest non-zero eigenvalue, are convex in the upper and lower weights and can be formulated as semidefinite programs. Thus, globally optimal weights can be efficiently determined to optimize flows in terms of these functions. These results generalize some of the results in \cite{ghosh2008minimizing} to simplicial complexes.



\subsection{Hodge Laplacian Flows}
Hodge Laplacian flows represent signals that evolve on simplicial complexes according to the differential equation
\begin{equation} \label{Hodgeflows}
    \dot{x}^k(t) = -L_k x^k(t), 
\end{equation}
where $x^k(t)\in C_k(\mathcal{K},\mathbb{R})\times \mathcal{T} $ is a time-dependent state associated with $k$-simplices (here, $\mathcal{T}$ denotes a time domain). For example, $L_0$ governs vertex flows, $L_1$ governs edge flow, $L_2$ governs face flows, and so on. The spectrum of the Hodge Laplacians provides important information about the flows \eqref{Hodgeflows}. The eigenvectors of $L_k$ reveal mode shapes, or flow patterns, while the eigenvalues specify decay rates for each mode. The kernel of $L_k$ shows fundamental structure of the flows, representing harmonic $k$-forms that do not decay over time. 
Consensus dynamics on simplicial complexes governed by Hodge Laplacian flows on \emph{unweighted} simplicial complexes were first studied in\cite{muhammad2006control}.

From Theorem~\ref{thm:hodge_decomp}, we have the following characterization of flows on weighted Hodge decompositions.
\begin{corollary}[Hodge Laplacian Flow Decomposition]
  Let $x^k(t)\in C_k(\mathcal{K},\mathbb{R})\times \mathcal{T} $ be a Laplacian flow according to Equation~\eqref{Hodgeflows}.
  Then, $x^k(t)$ evolves according to the differential equation,
  \begin{align}
    \dot{x}^k(t) = \Ldn_k x^{k-1}(t) + \Lup_{k+1}x^{k+1}(t),
  \end{align}
  where $x^{k-1}(t) \in C_{k-1}(\mathcal{K}, \mathbb{R})\times \mathcal{T}$, $x^{k+1}(t) \in C_{k+1}(\mathcal{K}, \mathbb{R})\times \mathcal{T}$ are flows on $(k-1)$-faces and $(k+1)$-faces, respectively.
\end{corollary}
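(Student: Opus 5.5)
The plan is to apply the Hodge decomposition of Theorem~\ref{thm:hodge_decomp} to the state $x^k(t)$ at each time $t$, and then push the flow operator $-L_k=-\Lup_k-\Ldn_k$ through each summand. The statement is loose about indices and signs, so we read it as an identity of the form $\dot x^k = -(\text{down part}) - (\text{up part})$, and we absorb the minus sign into the auxiliary flows $x^{k\pm1}$.

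\textbf{Decomposition at each time.} For each $t$ we write
\begin{align}
x^k(t) = B_k^T x^{k-1}(t) + x^k_H(t) + W_k^{-1}B_{k+1}W_{k+1}x^{k+1}(t),
\end{align}
with $x^k_H(t)\in\mathrm{Ker}(L_k)$. The coefficients $x^{k-1}(t)$ and $x^{k+1}(t)$ are chosen as solutions of \eqref{eq:lap1}--\eqref{eq:lap2} with right-hand side $x^k(t)$, which is possible by Theorem~\ref{thm:num_hodge_decomp}. For a unique, time-differentiable choice we take the pseudoinverse (minimum-norm) solutions. These depend linearly on $x^k(t)$, so they inherit differentiability from $x^k(t)$ and are genuine flows on the $(k\pm1)$-faces.

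\textbf{Applying $L_k$ to each piece.} First, the harmonic term is annihilated: $L_k x_H^k=0$. Second, $\Lup_k B_k^T = W_k^{-1}B_{k+1}W_{k+1}B_{k+1}^TB_k^T=0$, since $B_kB_{k+1}=0$. Hence
\begin{align}
L_kB_k^Tx^{k-1}=\Ldn_kB_k^Tx^{k-1}=B_k^T\Lup_{k-1}x^{k-1},
\end{align}
using the intertwining identity from the proof of Theorem~\ref{thr:1}.2. Third, symmetrically, $\Ldn_k W_k^{-1}B_{k+1}=B_k^TW_{k-1}^{-1}B_kB_{k+1}=0$. Hence
\begin{align}
L_kW_k^{-1}B_{k+1}W_{k+1}x^{k+1}=\Lup_kW_k^{-1}B_{k+1}W_{k+1}x^{k+1}=W_k^{-1}B_{k+1}W_{k+1}\Ldn_{k+1}x^{k+1}.
\end{align}
Therefore $\dot x^k=-L_kx^k$ splits into a term in $\mathrm{Im}(\partial_k^\dag)$ driven only by $x^{k-1}$, and a term in $\mathrm{Im}(\partial_{k+1})$ driven only by $x^{k+1}$. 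The harmonic component is constant.

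\textbf{Matching the stated form.} To reach the displayed equation we rewrite the two pieces as $\Ldn_k$ and $\Lup_{k}$-type operators applied to the lifted components. For example, $\Ldn_k(B_k^Tx^{k-1})$ is the down-Laplacian acting on the lower-face part, and similarly for the upper part. We then identify the flows in the statement with these (negated) components. We also note that projecting $\dot x^k$ onto each summand, which is legitimate because the decomposition is $W_k$-orthogonal by Theorem~\ref{thr:1}.1, yields decoupled flows
\begin{align}
\dot x^{k-1}=-\Lup_{k-1}x^{k-1}, \qquad \dot x^{k+1}=-\Ldn_{k+1}x^{k+1},
\end{align}
modulo kernels.

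The main obstacle is this bookkeeping step. The displayed operators $\Ldn_k$ and $\Lup_{k+1}$ do not literally act on $(k-1)$- and $(k+1)$-chains, so the statement must be interpreted via the embeddings $B_k^T$ and $W_k^{-1}B_{k+1}W_{k+1}$. Uniqueness of $x^{k\pm1}$ also holds only modulo $\mathrm{Ker}(B_k^T)$ and $\mathrm{Ker}(B_{k+1})$. I would handle this by fixing the minimum-norm representatives and stating the identity at the level of the embedded components.
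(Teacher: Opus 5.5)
Your proposal is correct and follows the paper's route. The paper justifies this corollary only by pointing to the weighted Hodge decomposition of Theorem~\ref{thm:hodge_decomp} applied to $x^k(t)$. Your identities $\Lup_kB_k^T=0$ and $\Ldn_kW_k^{-1}B_{k+1}=0$ (both from $B_kB_{k+1}=0$), and the resulting form $\dot x^k=-\Ldn_k(B_k^Tx^{k-1})-\Lup_k(W_k^{-1}B_{k+1}W_{k+1}x^{k+1})$, supply the details the paper leaves implicit, and your reading of the ill-typed display is the sensible one.

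The only slip is inherited from the paper: \eqref{eq:lap1} should read $\Lup_{k-1}\alpha=W_{k-1}^{-1}B_kW_ks^k$, since as printed $\Lup_k$ cannot act on a $(k-1)$-chain. With that correction, your choice of minimum-norm coefficients goes through.
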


We consider two key spectral functions of the Hodge Laplacian that govern flow dynamics: the trace of the pseudoinverse and the smallest non-zero eigenvalue. The trace of the pseudoinverse of the Hodge Laplacian is given by
\begin{equation}
    \mathbf{tr} L_k^+ = \sum_{\lambda_i \neq 0} \frac{1}{\lambda_i}.
\end{equation}
This function is related to the $\mathcal{H}_2$ norm of the flow, which can be interpreted as a measure of the spread of the flow relative to harmonic flows in the kernel. For the graph Laplacian $L_0$, it is proportional to the effective resistance of the graph. The smallest non-zero eigenvalue $\lambda_{\text{min}}(L_k) = \min_{\lambda \neq 0} \lambda(L_k)$ determines how fast the slowest non-harmonic flow dissipates.

\subsection{Optimizing Hodge Laplacian Flows}
We show that the trace of the Hodge Laplacian $L_k$ pseudoinverse and the smallest non-zero eigenvalue are convex functions of the upper and lower weights and can be formulated as a semidefinite programs. We start with the following result.
\begin{proposition}
Consider the weighted Hodge Laplacian $L_k = B_k^T W_{k-1}^{-1} B_k W_k + W_k^{-1}B_{k+1} W_{k+1} B_{k+1}^T$, where the $k$-level weights $W_k$ are fixed and the lower and upper weights $W_{k-1} = \text{diag} (w_{k-1})$ and $W_{k+1} = \text{diag} (w_{k+1})$ are variables. Let $K$ be a matrix whose columns form an orthonormal basis for the kernel of $L_k$. The problem of optimizing the weights to minimize the Hodge Laplacian pseudoinverse can be expressed as the semidefinite program
\begin{equation} \label{sdp-traceinv}
    \begin{aligned}
        &\text{minimize} \quad &&\mathbf{tr} \ Y \\
        &\text{subject to} \quad && \mathbf{1}^T w_{k-1} = 1 , \quad \mathbf{1}^T w_{k+1} = 1 \\
        & && w_{k-1} \geq 0, \quad w_{k+1} \geq 0 \\
        & && \left[\begin{array}{cc} L_k + K K^T & I \\I & Y\end{array}\right] \succeq 0,
    \end{aligned}
\end{equation}
where the variables are the weights $w_{k-1}$ and $w_{k+1}$ and the slack symmetric matrix $Y$.
\end{proposition}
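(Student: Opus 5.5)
The plan is to reduce the statement to the standard Schur-complement formulation of $\mathbf{tr}\,X^{-1}$ minimization, as in \cite{ghosh2008minimizing}. Two preliminary reductions are needed, and I treat them first.

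\emph{Symmetry.} Since $W_k$ is fixed and positive diagonal, $L_k$ is similar to
\begin{align}
  \tilde L_k = W_k^{1/2} L_k W_k^{-1/2} = W_k^{1/2}B_k^TW_{k-1}^{-1}B_kW_k^{1/2} + W_k^{-1/2}B_{k+1}W_{k+1}B_{k+1}^TW_k^{-1/2}.
\end{align}
This matrix is symmetric positive semidefinite and has the same spectrum as $L_k$. So $\mathbf{tr}\,L_k^+ = \sum_{\lambda_i\neq 0}1/\lambda_i = \mathbf{tr}\,\tilde L_k^+$. I would read the LMI with $L_k$ replaced by $\tilde L_k$, which coincides with $L_k$ when $W_k=I$.

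\emph{Affinity.} The up part is linear in $w_{k+1}$. The down part is linear in the entries of $W_{k-1}^{-1}$. I would therefore treat the ``lower weights'' as the diagonal entries $u = w_{k-1}^{-1}$, i.e., interpret the decision variable $w_{k-1}$ as the conductance-like quantity entering $L_k$ directly, so that $\tilde L_k$ is affine in $(u, w_{k+1})$. This is the step I expect to be the main obstacle. In the literal parametrization, $B_k^TW_{k-1}^{-1}B_k$ is matrix-convex rather than affine, and the constraint block would not be an LMI. The cleanest fix is this reparametrization, under which the normalization $\mathbf 1^T w_{k-1}=1$ becomes a linear budget on the variables that actually appear in the operator.

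\emph{The kernel matrix $K$.} I would show that $K$ is independent of the variables. For strictly positive weights,
\begin{align}
  \mathrm{Ker}(\tilde L_k) = \mathrm{Ker}(B_kW_k^{1/2}) \cap \mathrm{Ker}(B_{k+1}^TW_k^{-1/2}),
\end{align}
since $\tilde L_k$ is a sum of two PSD Gram-type terms. Neither subspace depends on $w_{k-1}$ or $w_{k+1}$; this is also consistent with Theorem~\ref{thm:hodge_decomp}. I would then choose $K$ as a fixed orthonormal basis of this subspace.

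Boundary points where some weights vanish can only enlarge the kernel. At such points $\tilde L_k + KK^T$ is singular, so the LMI forces $\mathbf{tr}\,Y$ to be infinite there, or rather infeasible for any finite $Y$. Such points are therefore harmlessly excluded, and the optimum is attained at weights that keep the kernel equal to $\mathrm{Im}\,K$.

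\emph{Main argument.} With this setup, $\tilde L_k$ and $KK^T$ have orthogonal ranges. Hence $\tilde L_k + KK^T \succ 0$, and
\begin{align}
  (\tilde L_k + KK^T)^{-1} = \tilde L_k^+ + KK^T.
\end{align}
By the Schur complement lemma, with the lower-right block nonsingular on the feasible set, the LMI in \eqref{sdp-traceinv} is equivalent to $Y \succeq (\tilde L_k+KK^T)^{-1}$. So minimizing $\mathbf{tr}\,Y$ over $Y$ for fixed weights gives
\begin{align}
  \mathbf{tr}\,\tilde L_k^+ + \mathbf{tr}\,KK^T = \mathbf{tr}\,L_k^+ + \dim\mathrm{Ker}(L_k).
\end{align}
The second term is a constant. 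Therefore \eqref{sdp-traceinv} has the same minimizers in the weights as $\mathbf{tr}\,L_k^+$ over the simplex constraints.

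\emph{Convexity.} Joint convexity follows as a by-product. The map $X\mapsto\mathbf{tr}\,X^{-1}$ is convex on the positive definite cone, and $\tilde L_k+KK^T$ is affine in the variables. Equivalently, the feasible set of \eqref{sdp-traceinv} is the intersection of an LMI with a polyhedron.
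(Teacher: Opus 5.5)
Your argument is correct, and its core is the paper's: the identity $(L_k+KK^T)^{-1}=L_k^{+}+KK^T$, the Schur complement reducing the block constraint to $Y\succeq(L_k+KK^T)^{-1}$, and the observation that the optimal $\mathbf{tr}\,Y$ is $\mathbf{tr}\,L_k^{+}$ plus the constant $\dim\mathrm{Ker}(L_k)$. What you add are repairs of steps the paper takes for granted.

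The paper simply asserts that $L_k$ is linear in $w_{k-1}$. In fact it is linear in $w_{k-1}^{-1}$, so in the literal variables the block matrix is not an LMI, and the problem need not even be convex. For example, take a single edge with $W_1=1$ and $w_0=(a,1-a)$. Then $L_1=1/(a(1-a))$ and $\mathbf{tr}\,L_1^{+}=a(1-a)$, which is concave in $a$. Your substitution $u=w_{k-1}^{-1}$, with the budget placed on $u$, is what makes the claim true. Say explicitly that this corrected statement is what you prove.

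Similarly, when $W_k\neq I$ the paper puts the nonsymmetric $L_k$ into the block matrix, where the Schur-complement step does not apply as written. Your similarity transform $\tilde L_k=W_k^{1/2}L_kW_k^{-1/2}$ fixes this (the paper's numerics only use $W_k=I$). Your argument that the kernel does not depend on the weights also supplies what the paper only asserts in the following proof.

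Two small points remain. First, apply the Schur complement with respect to the upper-left block $\tilde L_k+KK^T\succ 0$, which you have already shown, rather than with respect to $Y$. Second, ``harmlessly excluded'' is a convention, not a fact: the literal spectral $\mathbf{tr}\,L_k^{+}$ can drop where the kernel grows. For $L_0$ on a 3-node path with edge weights $(a,1-a)$, it equals $2/(3a(1-a))\geq 8/3$ for $0<a<1$ but $1/2$ at $a=1$. Your ``same minimizers'' conclusion therefore holds for the objective $\mathbf{tr}(\tilde L_k+KK^T)^{-1}-\dim\mathrm{Ker}(L_k)$, taken as $+\infty$ at such points. Equivalently, it holds over weights that keep the kernel equal to $\mathrm{Im}\,K$.
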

\begin{proof}
The pseudoinverse of the Hodge Laplacian can be expressed as
\begin{equation}
    L_k^+ = (L_k + K K^T)^{-1} - K K^T.
\end{equation}
Thus, $\mathbf{tr}(L_k^+) = \mathbf{tr}((L_k + K K^T)^{-1}) - m$, where $m$ is the dimension of the kernel of $L_k$. (If $L_k$ is full rank, then $m=0$ and $K$ can be omitted. Since $L_k + K K^T \succ 0$, then by the Schur complement
\begin{equation} \label{blockschur}
    \left[\begin{array}{cc} L_k + K K^T & I \\I & Y\end{array}\right] \succeq 0 \quad \Leftrightarrow \quad Y \succeq (L_k + K K^T)^{-1}.
\end{equation}
The optimal value for the slack variable is $Y = L_k + K K^T)^{-1}$, so the objective becomes $\mathbf{tr} L_k^+ + m$. Since Hodge Laplacian is linear in the upper and lower weights $w_{k+1}$ and $w_{k-1}$ when the $k$-level weights $w_k$ are fixed, the left side of \eqref{blockschur} is a linear matrix inequality. Thus, the problem \eqref{sdp-traceinv} is a semidefinite program.
\end{proof}

We have the following result for the smallest nonzero eigenvalue.
\begin{proposition}
Consider the weighted Hodge Laplacian $L_k = B_k^T W_{k-1}^{-1} B_k W_k + W_k^{-1}B_{k+1} W_{k+1} B_{k+1}^T$, where the $k$-level weights $W_k$ are fixed and the lower and upper weights $W_{k-1} = \text{diag} (w_{k-1})$ and $W_{k+1} = \text{diag} (w_{k+1})$ are variables. Let $K$ be a matrix whose columns form an orthonormal basis for the kernel of $L_k$. The problem determining weights to maximize the smallest non-zero eigenvalue of the Hodge Laplacian  can be formulated as the semidefinite program
\begin{equation} \label{sdp-mineig}
    \begin{aligned}
        &\text{maximize} \quad &&\gamma \\
        &\text{subject to} \quad && \mathbf{1}^T w_{k-1} = 1 , \quad \mathbf{1}^T w_{k+1} = 1 \\
        & && w_{k-1} \geq 0, \quad w_{k+1} \geq 0 \\
        & && \gamma I \preceq L_k + \beta K K^T.
    \end{aligned}
\end{equation}
The variables are the weights $w_{k-1}$ and $w_{k+1}$ and the slack variables $\gamma$ and $\beta$. 
\end{proposition}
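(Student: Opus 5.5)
My plan is to mirror the proof of the preceding proposition: first recast the smallest nonzero eigenvalue as a variational quantity on the orthogonal complement of the kernel, then use the kernel shift $\beta KK^T$ to turn that restricted bound into an unrestricted linear matrix inequality. Concretely, write the spectral decomposition of $L_k$ on the complement of $\mathrm{Ker}(L_k)$. As in the previous proof, I treat $L_k$ as symmetric, which is exact when $W_k = I$ and otherwise holds after the similarity $W_k^{1/2}L_kW_k^{-1/2}$. The matrix $L_k + \beta KK^T$ then has eigenvalues equal to the nonzero eigenvalues of $L_k$ on $\mathrm{Im}(K)^\perp$ and equal to $\beta$ on $\mathrm{Im}(K)$, since $K$ has orthonormal columns spanning the kernel.

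Hence $\lambda_{\min}(L_k + \beta KK^T) = \min\{\lambda_{\min}^{\neq 0}(L_k), \beta\}$. The constraint $\gamma I \preceq L_k + \beta KK^T$ is therefore equivalent to $\gamma \le \lambda^{\neq 0}_{\min}(L_k)$ and $\gamma \le \beta$. Because $\beta$ is a free variable, the second condition never binds: at the optimum one takes $\beta \ge \gamma$. The optimal $\gamma$ is then exactly the maximal achievable smallest nonzero eigenvalue over the simplex-constrained weights.

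The remaining step is to argue that the constraint is an LMI in $(w_{k-1}, w_{k+1}, \gamma, \beta)$. The up Laplacian $W_k^{-1}B_{k+1}W_{k+1}B_{k+1}^T$ is linear in $w_{k+1}$ when $W_k$ is fixed. The term $\beta KK^T$ is linear in $\beta$, and $\gamma I$ is linear in $\gamma$. The normalization and nonnegativity constraints are linear, so maximizing the linear objective $\gamma$ gives a semidefinite program, and concavity of $\lambda^{\neq0}_{\min}$ in the weights follows as a by-product.

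The main obstacle is precisely this linearity claim. The down part $B_k^TW_{k-1}^{-1}B_kW_k$ involves $W_{k-1}^{-1}$, so it is linear in the reciprocal weights $1/w_{k-1}$, not in $w_{k-1}$. A second, related issue is that $K$ depends on the weights, so treating it as fixed requires the kernel to be weight-independent. My first attempt would be to reparametrize the lower weights by $u_{k-1} = w_{k-1}^{-1}$, which makes $L_k$ affine in $(u_{k-1}, w_{k+1})$, and to note, as in the preceding proposition, that the harmonic space is fixed for positive weights. That fixedness comes from the Hodge decomposition in Theorem~\ref{thm:hodge_decomp}, with the kernel identified topologically via $\mathrm{Ker}\,B_k \cap \mathrm{Ker}\,B_{k+1}^T$ up to the $W_k$-scaling. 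If that reparametrization is not acceptable within the stated normalization, I would instead state the result with the down weights entering through $W_{k-1}^{-1}$ as the decision variable, which is exactly what keeps the matrix inequality linear.
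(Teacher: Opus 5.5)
Your core argument is the paper's: since $\beta KK^T$ acts only on the weight-independent kernel, $\gamma I \preceq L_k+\beta KK^T$ is equivalent to $\gamma\le\min\{\lambda^{\neq0}_{\min}(L_k),\beta\}$. Because $\beta$ is free it can be taken $\ge\gamma$, so the optimal $\gamma$ is the best achievable smallest nonzero eigenvalue.

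Where you go beyond the paper, you are right, and what you call the main obstacle is a defect in the statement rather than a gap in your reasoning. The paper's proof never checks that $L_k$ is symmetric. It is symmetric only when $W_k=I$; otherwise one needs $W_k^{1/2}L_kW_k^{-1/2}$, with $K$ spanning $W_k^{1/2}\mathrm{Ker}(L_k)$. The proof also relies on the earlier assertion that $L_k$ is linear in $w_{k-1}$, which is false. Already for a single edge with unit edge weight, $\Ldn_1 = 1/w_{0,1}+1/w_{0,2}$, so $\{\gamma\le \Ldn_1\}$ is the hypograph of a strictly convex function and is not convex. Likewise, the concavity by-product you mention holds in $(u_{k-1},w_{k+1})$, not in $w_{k-1}$.

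Of your two remedies, only the second works. Substituting $u_{k-1}=w_{k-1}^{-1}$ makes the matrix inequality affine. However, the normalization $\mathbf{1}^Tw_{k-1}=1$ then becomes $\sum_i 1/u_{k-1,i}=1$, which is a nonconvex equality. So the program is an SDP only if the normalization and sign constraints are imposed on $u_{k-1}$ directly, or if $w_{k-1}$ is held fixed. Holding $w_{k-1}$ fixed is what the paper's numerical experiments actually do: only face weights are optimized for $L_1$.

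One small caveat on your kernel argument: $\mathrm{Ker}(L_k)$ is weight-independent only for strictly positive $w_{k+1}$. At boundary points where some $w_{k+1,j}=0$, the kernel may grow, and the fixed-$K$ constraint then forces $\gamma\le 0$ there. The SDP is therefore maximizing $\lambda_{\min}$ of $L_k$ restricted to $\mathrm{Im}(K)^\perp$. This coincides with the smallest nonzero eigenvalue for strictly positive weights and is its natural concave extension to the boundary.
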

\begin{proof}
    For any symmetric matrix $A$, the constraint $\gamma I \preceq A$ ensures that all eigenvalues of $A$ (including the smallest one) are at least $\gamma$. Since the Hodge Laplacian $L_k$ may be rank deficient due to structural ``holes'' in the associated simplicial complex, and we would like our optimization problem to focus on the smallest non-zero eigenvalue, we include the constraint $\gamma I \preceq L_k + \beta K K^T$. The term $\beta K K^T$ shifts the zero eigenvalues of $L_k$ away from the origin, which are independent of the weights. The constraint ensures that the unconstrained slack variable $\beta$ satisfies $\beta \geq \gamma$ so that at optimality, the minimum non-zero eigenvalue of $L_k$ is $\gamma$.
\end{proof}

\section{Numerical Examples}
\label{sobj:numerical-examples}
In this section, we illustrate the computation of optimal weights for simplicial complexes based on the results in Section III. To obtain problem instances, we generate Vietoris Rips complexes. A Vietoris Rips complex is a type of simplicial complex built from a set of points based on pairwise distances and can be used to explore the higher-order topological and algebraic structure of networks and datasets. When the points are sampled randomly from a metric space, Vietoris Rips complexes can be viewed as a generalization of gemometric random graphs.

Let $p = [p_1, p_2, ..., p_N]$ be a set of $N$ points in a metric space. For a given threshold $\epsilon$, a Vietoris Rips complex is constructed as follows. Each point $p_i$ represents a vertex. A pair of points forms an edge if $d(p_i, p_j) \leq \epsilon$. Each $k$-simplex up to a desired order included in the complex if all of its vertices are pairwise connected by edges. Thus, if a set of $k+1$ points forms a fully connected subgraph in the associated distance graph, it is included as a $k$-simplex.

To illustrate the computation of optimal higher-order weights described in Section III, we generated random Vietoris Rips complexes by sampling points uniformly on the unit square. Fig. 1 shows a Vietoris Rips complex based on 30 points with a threshold $\epsilon = 0.5$, which has 30 vertices, 151 edges, and 339 faces. We solved the semidefinite program \eqref{sdp-traceinv} for the weighted Hodge Laplacian $L_1$ to compute face weights to minimize the pseudoinverse with the vertex and edge weights all fixed to unity. Fig. 2 shows the optimal face weights for the 339 faces, which improve the pseudoinverse by 8.9\% compared to the unweighted Hodge Laplacian (with unity face weights). We also solved the semidefinite program \eqref{sdp-mineig} to maximize the smallest non-zero eigenvalue of the weighted Hodge Laplacian $L_1$. Fig. 3 shows the optimal weights with the same face ordering as in Fig. 2. These weights increased the smallest non-zero eigenvalue by 228\% compared to the unweighted Hodge Laplacian. It can be seen that the weights are substantially different from the ones that optimize the pseudoinverse trace. Fig. 4 shows the corresponding edge flows (solutions of $\dot x^1(t) = -L_1 x^1(t)$) for the unweighted and optimally weighted cases. It can be seen that the edge flow with optimal weights exhibits much faster convergence.
\begin{figure}[tb]
    \centering
    \includegraphics[width=\linewidth]{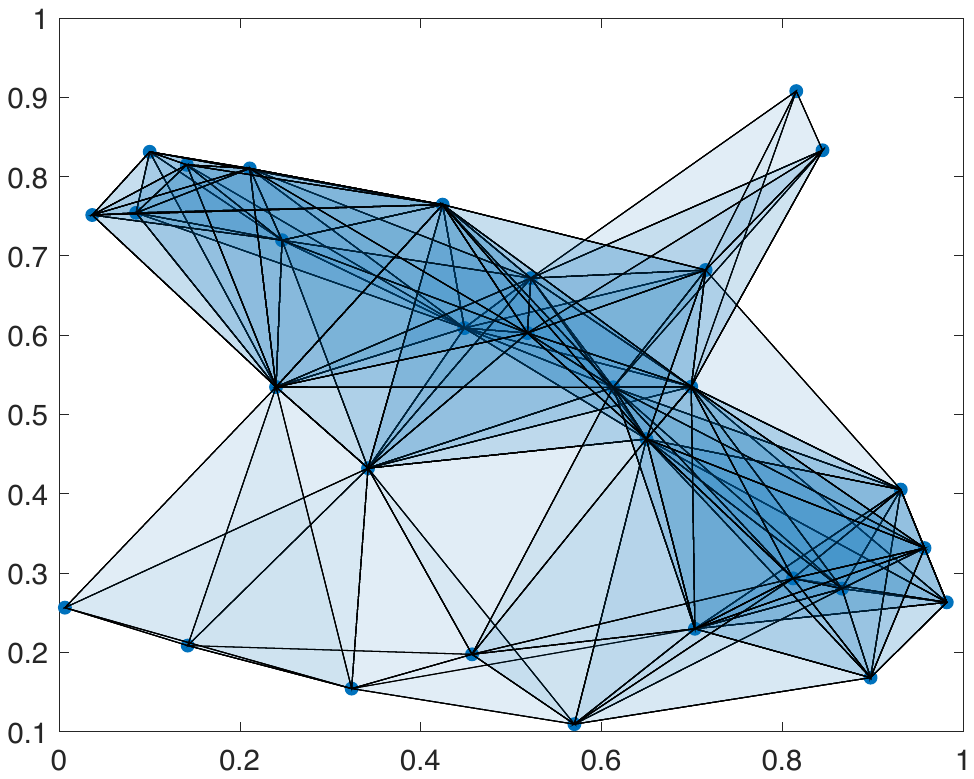}
    \caption{A Vietoris Rips complex generated from 30 points sampled uniformly on the unit square with threshold 0.5. The complex contains 30 nodes, 151 edges, and 339 faces. Each face in the complex is shaded according to the optimal face weights computed with the semidefinite program \eqref{sdp-traceinv} for the weighted Hodge Laplacian $L_1$.}
    \label{fig:VR}
\end{figure}
\begin{figure}[!]
    \centering
    \includegraphics[width=\linewidth]{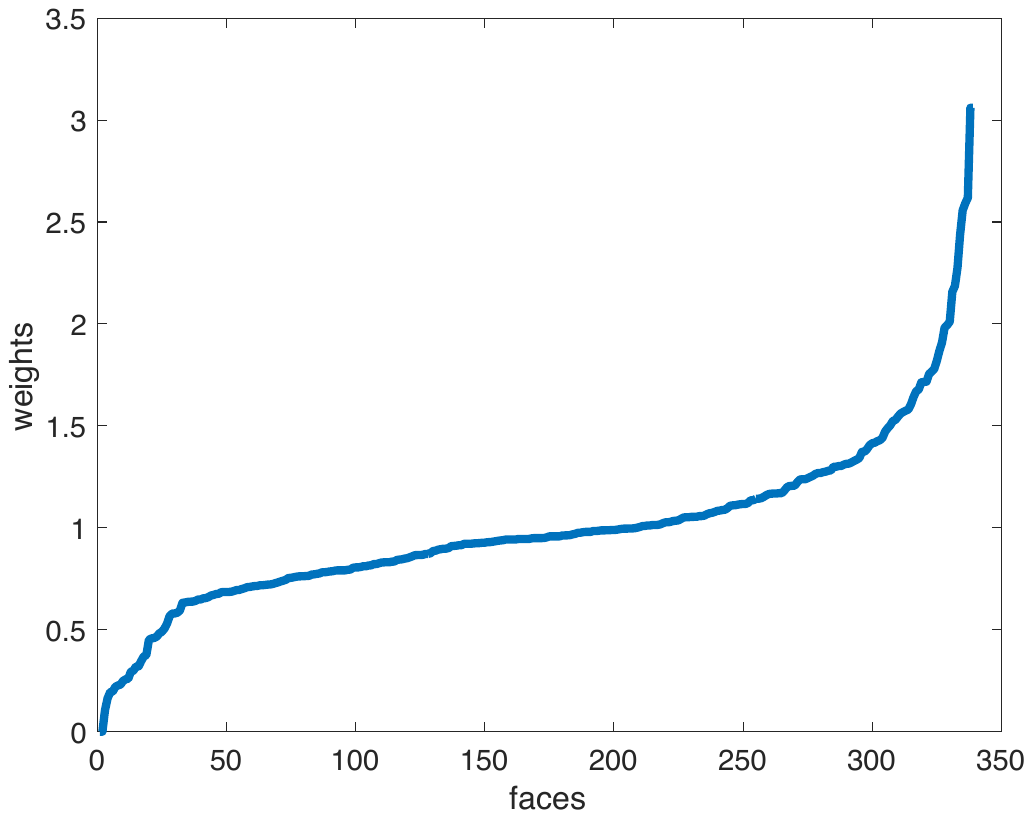}
    \caption{Optimal face weights for the 339 faces of the Vietoris Rips complex from Fig. 1. The optimal weights provide a 8.9\% improvement of the Hodge Laplacian pseudoinverse compared to the unweighted Hodge Laplacian (with unity weights).}
    \label{fig:VR}
\end{figure}
\begin{figure}[tb]
    \centering
    \includegraphics[width=\linewidth]{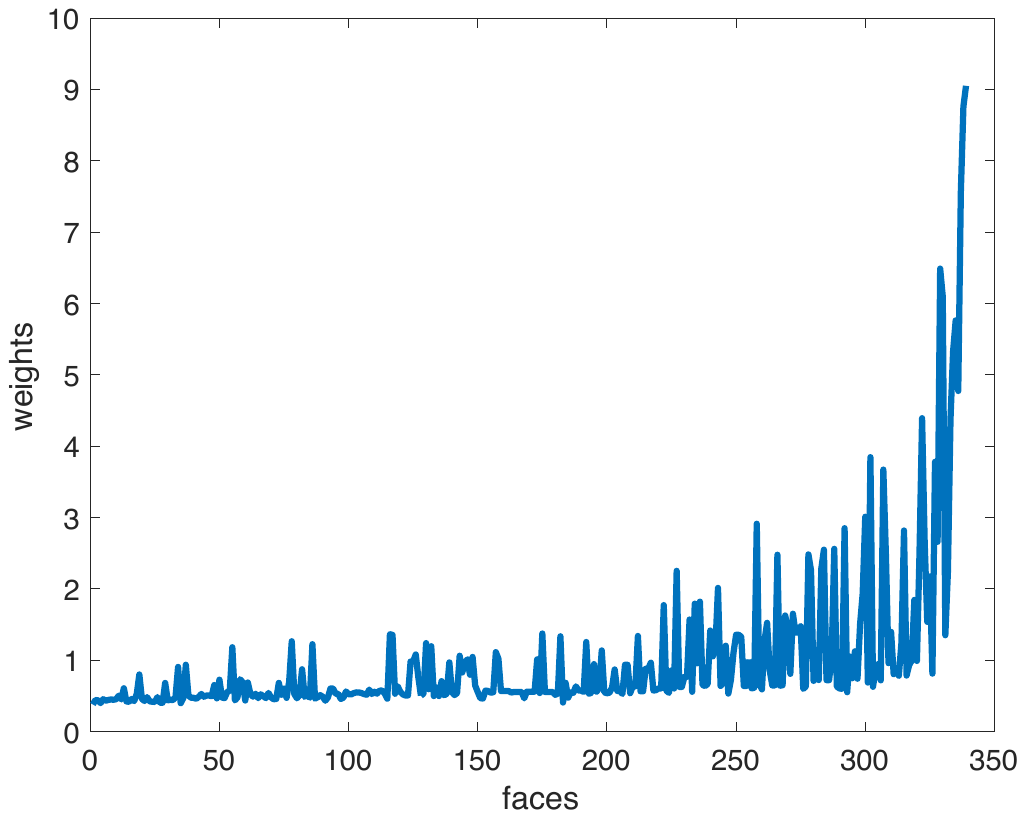}
    \caption{Optimal face weights for the 339 faces of the Vietoris Rips complex from Fig. 1 computed with the semidefinite program \eqref{sdp-mineig} to maximize the smallest non-zero eigenvalue of the weighted Hodge Laplacian $L_1$, with the same face ordering as in Fig. 2. The optimal weights provide a 228\% increase of the smallest non-zero eigenvalue compared to the unweighted Hodge Laplacian (with unity weights).}
    \label{fig:VR}
\end{figure}
\begin{figure}[tb]
    \centering
    \includegraphics[width=\linewidth]{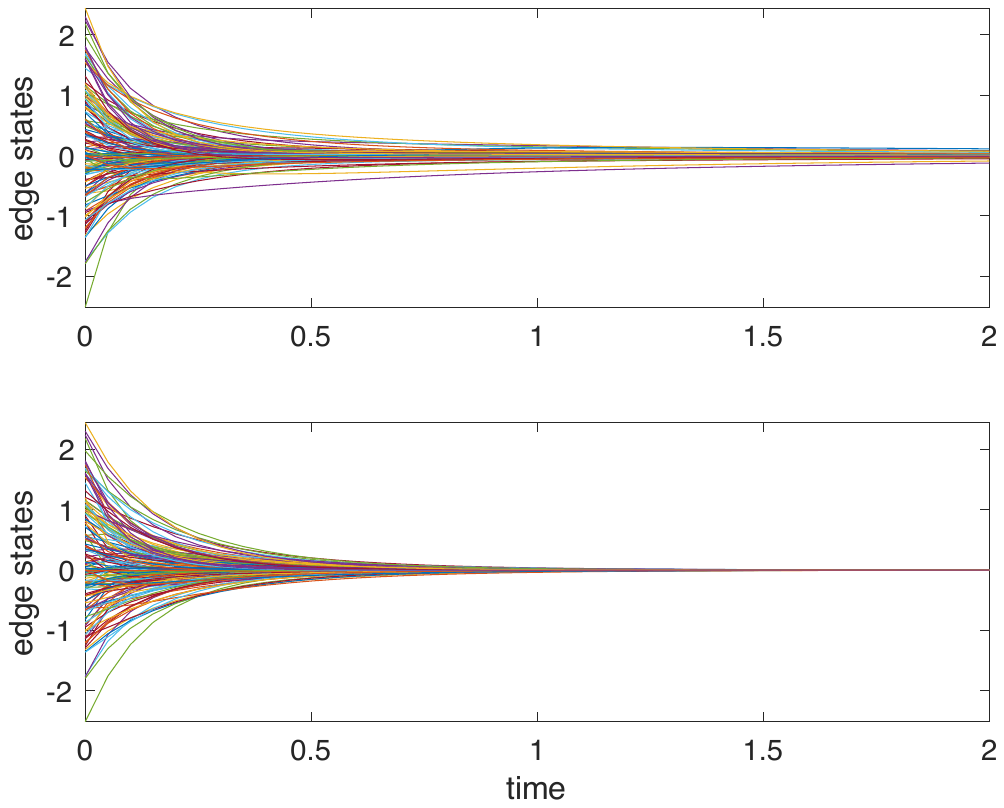}
    \caption{(top) Edge flows for the unweighted Hodge Laplacian; (bottom) edge flows for the weighted Hodge Laplacian with optimal weights computed with the semidefinite program \eqref{sdp-traceinv} to maximize the smallest non-zero eigenvalue of the weighted Hodge Laplacian $L_1$, exhibiting much faster convergence.}
    \label{fig:VR}
\end{figure}

\section{Conclusions and Future Work}
\label{sobj:concl-future-work}
In this paper, we developed a framework for weighted consensus dynamics based on weighted Hodge Laplacian flows and demonstrated some decomposition results. We showed that two key spectral functions of the weighted Hodge Laplacians, the trace of the pseudoinverse and the smallest non-zero eigenvalue, are jointly convex in upper and lower simplex weights and can be formulated as semidefinite programs and thus can be globally optimized. Future work will explore further system theoretic properties of dynamics on simplicial complexes and develop additional complementary optimization algorithms for designing simplicial complexes with desired properties.
\bibliographystyle{IEEEtran}
\bibliography{bibliography.bib}

@book{grady2010discrete,
  title={Discrete calculus: Applied analysis on graphs for computational science},
  author={Grady, Leo J and Polimeni, Jonathan R},
  volume={3},
  year={2010},
  publisher={Springer}
}

@article{horak2013spectra,
  title={Spectra of combinatorial {L}aplace operators on simplicial complexes},
  author={Horak, Danijela and Jost, J{\"u}rgen},
  journal={Advances in Mathematics},
  volume={244},
  pages={303--336},
  year={2013},
  publisher={Elsevier}
}

@article{loll2019quantum,
  title={Quantum gravity from causal dynamical triangulations: a review},
  author={Loll, Renate},
  journal={Classical and Quantum Gravity},
  volume={37},
  number={1},
  pages={013002},
  year={2019},
  publisher={IOP Publishing}
}

@article{vicsek1995,
    title = {{Novel type of phase transistion in a system of self-driven particles}},
    year = {1995},
    journal = {New York},
    author = {Vicsek, Tamas and Czirok, Andras and Ben-Jacob, Eshel and Cohen, Inon and Shochet, Ofer},
    number = {6},
    pages = {1226--1229},
    volume = {75},
    isbn = {0031-9007{\textbackslash}r1079-7114},
    doi = {10.1103/PhysRevLett.74.2248},
    issn = {1079-7114},
    pmid = {10059141},
    arxivId = {arXiv:1011.1669v3}
}

@inproceedings{de2017controllability,
  title={Controllability and data-driven identification of bipartite consensus on nonlinear signed networks},
  author={Hudoba de Badyn, Mathias and Alemzadeh, Siavash and Mesbahi, Mehran},
  booktitle={2017 IEEE 56th Annual Conference on Decision and Control (CDC)},
  pages={3557--3562},
  year={2017},
  organization={IEEE}
}

@inproceedings{aguilar2014necessary,
  title={Necessary conditions for controllability of nonlinear networked control systems},
  author={Aguilar, Cesar O and Gharesifard, Bahman},
  booktitle={2014 american control conference},
  pages={5379--5383},
  year={2014},
  organization={IEEE}
}

@article{Tanner2004,
    title = {{Leader-to-formation stability}},
    year = {2004},
    journal = {IEEE Transactions on Robotics and Automation},
    author = {Tanner, H G and Pappas, G J and Kumar, V},
    number = {3},
    pages = {443--455},
    volume = {20},
    doi = {10.1109/TRA.2004.825275},
    issn = {1042-296X}
}

@article{wang2016fully,
  title={A fully-decentralized consensus-based ADMM approach for DC-OPF with demand response},
  author={Wang, Yamin and Wu, Lei and Wang, Shouxiang},
  journal={IEEE Transactions on Smart Grid},
  volume={8},
  number={6},
  pages={2637--2647},
  year={2016},
  publisher={IEEE}
}

@article{Altafini2012,
    title = {{Dynamics of opinion forming in structurally balanced social networks}},
    year = {2012},
    journal = {PloS one},
    author = {Altafini, Claudio},
    number = {6},
    pages = {5876--5881},
    volume = {7}
}

@inproceedings{Joordens2009,
    title = {{Underwater swarm robotics consensus control}},
    year = {2009},
    booktitle = {Proc. IEEE International Conference on Systems, Man and Cybernetics},
    author = {Joordens, Matthew A. and Jamshidi, Mo},
    number = {October},
    pages = {3163--3168},
    address = {San Antonio},
    isbn = {9781424427949},
    doi = {10.1109/ICSMC.2009.5346165},
    issn = {1062922X}
}

@inproceedings{foight2019time,
  title={Time scale design for network resilience},
  author={Foight, Dillon R and Hudoba de Badyn, Mathias and Mesbahi, Mehran},
  booktitle={2019 IEEE 58th Conference on Decision and Control (CDC)},
  pages={2096--2101},
  year={2019},
  organization={IEEE}
}

@article{farhat2021h,
  title={H$_\infty$ network optimization for edge consensus},
  author={Farhat, Omar and Abou Jaoude, Dany and Hudoba de Badyn, Mathias},
  journal={European Journal of Control},
  volume={62},
  pages={2--13},
  year={2021},
  publisher={Elsevier}
}

@article{hirani2010least,
  title={Least squares ranking on graphs},
  author={Hirani, Anil N and Kalyanaraman, Kaushik and Watts, Seth},
  journal={arXiv preprint arXiv:1011.1716},
  year={2010}
}

@book{Mesbahi2010,
    title = {{Graph-Theoretic Methods in Multiagent Networks}},
    year = {2010},
    booktitle = {Princeton Univ Pr},
    author = {Mesbahi, Mehran and Egerstedt, Magnus},
    pages = {403},
    publisher = {Princeton University Press},
    isbn = {9780691140612},
    doi = {10.1073/pnas.0703993104},
    issn = {1098-6596},
    pmid = {25246403},
    arxivId = {arXiv:1011.1669v3}
}

@inproceedings{battiloro2023topological,
  title={Topological signal processing over weighted simplicial complexes},
  author={Battiloro, Claudio and Sardellitti, Stefania and Barbarossa, Sergio and Di Lorenzo, Paolo},
  booktitle={ICASSP 2023-2023 IEEE International Conference on Acoustics, Speech and Signal Processing (ICASSP)},
  pages={1--5},
  year={2023},
  organization={IEEE}
}

@article{wu2018weighted,
  title={Weighted (co) homology and weighted {L}aplacian},
  author={Wu, Chengyuan and Ren, Shiquan and Wu, Jie and Xia, Kelin},
  journal={arXiv preprint arXiv:1804.06990},
  year={2018}
}

@article{barbarossa2020topological,
  title={Topological signal processing over simplicial complexes},
  author={Barbarossa, Sergio and Sardellitti, Stefania},
  journal={IEEE Transactions on Signal Processing},
  volume={68},
  pages={2992--3007},
  year={2020},
  publisher={IEEE}
}

@article{foight2020performance,
  title={Performance and design of consensus on matrix-weighted and time-scaled graphs},
  author={Foight, Dillon R and Hudoba de Badyn, Mathias and Mesbahi, Mehran},
  journal={IEEE Transactions on Control of Network Systems},
  volume={7},
  number={4},
  pages={1812--1822},
  year={2020},
  publisher={IEEE}
}

@article{de2020mathcal,
  title={$\mathcal{H}_2$ Performance of Series-Parallel Networks: A Compositional Perspective},
  author={Hudoba de Badyn, Mathias and Mesbahi, Mehran},
  journal={IEEE Transactions on Automatic Control},
  volume={66},
  number={1},
  pages={354--361},
  year={2020},
  publisher={IEEE}
}

@inproceedings{muhammad2006control,
  title={Control using higher order {L}aplacians in network topologies},
  author={Muhammad, Abubakr and Egerstedt, Magnus},
  booktitle={Proc. of 17th International Symposium on Mathematical Theory of Networks and Systems},
  pages={1024--1038},
  year={2006},
  organization={Citeseer}
}

@article{ghosh2008minimizing,
  title={Minimizing effective resistance of a graph},
  author={Ghosh, Arpita and Boyd, Stephen and Saberi, Amin},
  journal={SIAM review},
  volume={50},
  number={1},
  pages={37--66},
  year={2008},
  publisher={SIAM}
}

@article{de2021graph,
  title={Graph-theoretic optimization for edge consensus},
  author={Hudoba de Badyn, Mathias and Foight, Dillon R and Calderone, Daniel and Mesbahi, Mehran and Smith, Roy S},
  journal={IFAC-PapersOnLine},
  volume={54},
  number={9},
  pages={533--538},
  year={2021},
  publisher={Elsevier}
}

@article{zelazo2010edge,
  title={Edge agreement: Graph-theoretic performance bounds and passivity analysis},
  author={Zelazo, Daniel and Mesbahi, Mehran},
  journal={IEEE Transactions on Automatic Control},
  volume={56},
  number={3},
  pages={544--555},
  year={2010},
  publisher={IEEE}
}

@article{Space2014,
    title = {{Controllability of Multi-Agent Systems From a Graph-Theoretic Perspective}},
    year = {2014},
    author = {Rahmani, Amirreza},
    number = {6},
    pages = {2599--2622},
    volume = {52},
    isbn = {2010001753}
}

@inproceedings{Alemzadeh2017,
    title = {{Controllability and stabilizability analysis of signed consensus networks}},
    year = {2017},
    booktitle = {Proc. IEEE Conference on Control Technology and Applications},
    author = {Alemzadeh, Siavash and Hudoba de Badyn, Mathias and Mesbahi, Mehran},
    pages = {55--60},
    address = {Kohala Coast, USA}
}

@inproceedings{Hudobadebadyn2016,
    title = {{Growing controllable networks via whiskering and submodular optimization}},
    year = {2016},
    booktitle = {Proc. 55th IEEE Conference on Decision and Control},
    author = {Hudoba de Badyn, Mathias and Mesbahi, Mehran},
    pages = {867--872},
    address = {Las Vegas, USA},
    isbn = {9781509018369},
    doi = {10.1109/CDC.2016.7798376},
    arxivId = {1609.08733}
}

\end{document}